\documentclass[11pt,letter]{article}
\usepackage[utf8]{inputenc}
\usepackage{amsmath}
\usepackage{amssymb}
\usepackage{amsthm}
\usepackage[english]{babel}
\usepackage{booktabs}
\usepackage{graphicx}
\usepackage[colorlinks,allcolors=blue]{hyperref}
\usepackage{pgfplots}
\pgfplotsset{compat=1.14}
\usepackage{xcolor}
\usepackage[margin=1in]{geometry}
\usepackage{caption}
\usepackage{subcaption}
\usepackage{nicefrac} 
\usepackage[affil-it]{authblk}
\usepackage{stmaryrd}
\usepackage{mathtools}
\usepackage[ruled,linesnumbered]{algorithm2e}
\usepackage{bm}
\usepackage{array}
\usepackage{empheq}

\usepackage{tabularx}

\usepackage[shortlabels]{enumitem}

\numberwithin{equation}{section}    

\usepackage{xpatch} 
\makeatletter
   \xpatchcmd{\@thm}{\fontseries\mddefault\upshape}{}{}{} 
\makeatother

\usepackage{float}

\usepackage{newtxtext}
\usepackage{newtxmath}

\newcommand{\innLtwo}[2]{\langle#1,#2 \rangle}
\newcommand{\innLtwolr}[2]{\left\langle#1,#2 \right\rangle}
\newcommand{\gegen}[2]{G_{#1}^{(#2)}}
\newcommand{\pol}[1]{\mathbb R_{#1}[x]}
\newcommand{\polC}[1]{\mathbb C_{#1}[x]}
\newcommand{\polquot}[2]{\mathbb R_{#1, #2}[x]}
\newcommand{\innprodQ}[2]{\langle #1, #2 \rangle_{\mathcal S}}

\newcommand{\innprodgegen}[2]{\langle #1, #2 \rangle_{\gamma}}

\definecolor{mygreen}{rgb}{0.0, 0.5, 0.0}

\newtheorem{theorem}{Theorem}[section]
\newtheorem{proposition}[theorem]{Proposition}
\newtheorem{lemma}[theorem]{Lemma}
\newtheorem{corollary}[theorem]{Corollary}

\theoremstyle{remark}
\newtheorem{remark}[theorem]{Remark}
\theoremstyle{definition}

\title{Characterization of the Tau Spectrum for Eigenvalue Problems}

\author[1,2]{Keaton J. Burns}
\author[3,4]{Ilaria Fontana\thanks{Corresponding author}}
\author[2,4,5]{Daniel Lecoanet}
\author[6]{Geoffrey M. Vasil}

\affil[1]{Department of Mathematics, Massachusetts Institute of Technology, Cambridge, MA, 02139}
\affil[2]{Center for Computational Astrophysics, Flatiron Institute, New York, NY, 10010}
\affil[3]{Department of Mathematics, University of Arizona, USA}
\affil[4]{Department of Engineering Sciences and Applied Mathematics, Northwestern University, Evanston, IL, 60208 USA}
\affil[5]{Center for Interdisciplinary Exploration and Research in Astrophysics, Northwestern University, Evanston, IL 60208 USA}
\affil[6]{School of Mathematics, University of Edinburgh, EH9 3FD, UK}

\date{}

\begin{document}

\maketitle

\begin{abstract}
    We consider tau approximations of a general eigenvalue problem for a $K$-th-order linear differential operator in one dimension, and we show that the approximate spectrum satisfies a resolvent-based spectral determinant property, which we use to investigate it. More specifically, we analyze the spectral determinant property for a second-order Sturm-Liouville eigenvalue problem approximated using Gegenbauer tau methods. Notably, a quantitative error analysis identifies two values of the Gegenbauer parameter~$\gamma$ leading to better approximation properties. These values correspond to Legendre polynomials and their derivatives. Moreover, we discuss the behavior of the largest eigenvalue of the tau approximation. The results are validated with a numerical implementation.
\end{abstract}

\textbf{Keywords:} Spectral tau methods, eigenvalue problems, Gegenbauer polynomials

\medskip

\textbf{MSC codes:} 65L10, 65L15, 65N35

\section{Introduction}

Spectral methods are widely used to numerically solve differential equations, providing fast convergence for problems with smooth solutions in simple geometries \cite{BOOK_Gottlieb_1977, BOOK_Canuto_1988, BOOK_Canuto_2006, BOOK_Boyd_2001}.
However, different implementations of spectral methods, primarily varying in how they incorporate boundary conditions, produce different approximate solutions.
In this paper, we are interested in analyzing finite truncations of the following eigenvalue problem: find $(\lambda, u) \in \mathbb C \times C^K([a,b])$ such that
\begin{equation}\label{eq:eigenvalue problem}
    \begin{cases}
        (\mathcal L - \lambda) u = 0 & \quad \text{in } \Omega\\
        \mathcal B(u) = 0 & \quad \text{on } \partial\Omega,
    \end{cases}
\end{equation}
where $\Omega = [a, b] \subset \mathbb R$ is a real interval, $C^K([a,b])$ is the space of functions with continuous derivatives up to order $K$ over the interval $[a,b]$, $\mathcal L$ is a $K$-th order constant-coefficient differential operator of the type
\begin{equation}\label{eq:linear operator}
    \mathcal L \coloneqq \sum_{k=0}^K \mathcal L_k \mathcal D^k,
    \qquad\qquad
    \mathcal L_k\in \mathbb R, k = 0, \dots, K,
\end{equation}
with $\mathcal L_K\neq 0$, $\mathcal D\coloneqq d/dx$, $\lambda$ is the spectral value (eigenvalue), $u = u(x)$ is the eigenfunction, and $\mathcal B(u)$ denotes a two-point boundary operator:
\begin{equation}\label{eq:boundary operator}
    \mathcal B(u) = \sum_{k=0}^{K-1} \left(\mathcal B_k^{(0)} \mathcal D^k u |_a + \mathcal B_k^{(1)} \mathcal D^k u |_b\right),
    \qquad
    \mathcal B_k^{(0)}, \mathcal B_k^{(1)} \in \mathbb R^K, k = 0, \dots, K-1,
\end{equation}
with ${\rm rank} [\, \begin{matrix} \mathcal B^{(0)} & \mathcal B^{(1)} \end{matrix}\,] = K$, where $\mathcal B^{(0)}$ and $\mathcal B^{(1)}$ are the $K\times K$ matrices with columns $\mathcal B_k^{(0)}$ and  $\mathcal B_k^{(1)}$, $k = 0, \dots, K-1$, respectively.
We denote by $\mathcal L_{\mathcal B}$ the operator $\mathcal L$ restricted to the domain
\begin{equation*}
    D(\mathcal L_{\mathcal B}) = \{u\in C^K([a,b])\ :\ \mathcal B(u) = 0\}.
\end{equation*}
The exact spectrum of this eigenvalue problem is denoted by $\sigma(\mathcal L_{\mathcal B})$.
As a particular example of this class of problems, when $\mathcal L = \mathcal D^2$, $\mathcal B_0^{(0)} = [\begin{matrix}
    1 & 0
\end{matrix}]$,
$\mathcal B_0^{(1)} = [\begin{matrix}
    0 & 1
\end{matrix}]$,
$\mathcal B_1^{(0)} = \mathcal B_1^{(1)} = [\begin{matrix}
    0 & 0
\end{matrix}]$, and $[a, b] = [-1, 1]$, we recover the simple second-order  Sturm–Liouville  problem: find $(\lambda, u) \in \mathbb C \times C^2([a,b])$ such that
\begin{equation}\label{eq:second-order eigenvalue problem}
    \begin{cases}
         \mathcal D^2 u = \lambda u &\quad x\in [-1, 1], \\
        u(\pm 1) = 0.
    \end{cases}
\end{equation}
One of the reasons why we are interested in this particular problem is that we can numerically validate the results since exact eigenvalue-eigenfunction pairs $(\lambda_j, \varphi_j)$ are known \cite{Charalambides_2008-2}:
\begin{equation}\label{eq:second-order exact}
    \lambda_{j} = - \left(\frac{j\pi}{2}\right)^2 
    \qquad\quad\text{and}\qquad\quad
    \varphi_{j}(x) = \begin{cases}
        \cos (\sqrt{\lvert\lambda_{j}\rvert} x) &\quad j \in \mathbb N, j \text{ odd},\\
        \sin(\sqrt{\lvert\lambda_{j}\rvert} x) &\quad j \in \mathbb N, j \text{ even}.
    \end{cases}
\end{equation}
As a consequence, denoting by $\mathcal D^2_{\rm Dir}$ the second-derivative operator restricted to the domain of twice-differentiable functions over $[-1, 1]$ satisfying the Dirichlet boundary conditions $u(\pm1) = 0$, the spectrum of \eqref{eq:second-order eigenvalue problem} is
\begin{equation*}
    \sigma\bigl(\mathcal D^2_{\rm Dir}\bigr) = \left\{- \left(\frac{j\pi}{2}\right)^2\right\}_{j\in \mathbb N}.
\end{equation*}

The \emph{tau method}, whose initial introduction goes back to \cite{Lanczos_1938}, is a class of spectral methods that consist of solving a perturbed equation where additional unknown coefficients, the $\tau$-terms, are added to the equation to simultaneously enforce the boundary conditions without the need to specialize the trial basis to the specific boundary conditions.
From now on, we denote by $\pol{k}$ and $\polC{k}$ the spaces of polynomials of degree at most $k$ with real and complex coefficients, respectively, and by $\polquot{n}{k}$ the quotient space $\pol{n}/\pol{n-k}$, for all integers $n, k\in\mathbb N_0$, $n\geq k$.
Consider a set of $K$ polynomials
\begin{equation}\label{eq:tau polynomials}
    \mathcal Q = \begin{bmatrix}
        Q_N & Q_{N-1} & \dots & Q_{N-K+1}
    \end{bmatrix}
\end{equation}
such that
\begin{equation}\label{eq:tau polynomials hypothesis}
    Q_{N-k} \in\pol{N-k}
    \qquad\forall k = 0, \dots, K-1,
    \qquad\text{and}\qquad
    {\rm span} (\mathcal Q) \simeq \polquot{N}{K}.
\end{equation}
Then, the $\tau$-perturbed problem of degree $N$ of the eigenvalue problem \eqref{eq:eigenvalue problem} reads: find $(\lambda, u, \tau) \in \mathbb C\times \polC{N} \times \mathbb C^K$,
such that
\begin{equation}\label{eq:tau perturbed problem}
    \begin{cases}
        (\mathcal L - \lambda) u = \mathcal Q\cdot \tau & \quad \text{in } \Omega\\
        \mathcal B(u) = 0 & \quad \text{on } \partial\Omega,
    \end{cases}
\end{equation}
where
\begin{equation*}
    \tau =
    \begin{bmatrix}
        \tau_1 \\
        \tau_2 \\
        \vdots \\
        \tau_K
    \end{bmatrix}
    \in\mathbb C^K
\end{equation*}
is the vector of (unknown) tau coefficients.
In classical tau formulations \cite{BOOK_Lanczos_1956, Clenshaw_1956, Coleman_1976, Chaves_1968}, the same test and trial functions are used and the last rows of the discrete matrix representing the linear operator are replaced by the boundary conditions.
This typically produces dense matrices and is equivalent to taking the $\tau$-polynomials $\mathcal Q$ as a subset of the test/trial functions.
Sparse spectral methods use different test and trial bases to render the interior operators banded.
Sparse schemes using recurrence and integral relations of orthogonal polynomials date back to \cite{Clenshaw_1956}.
Today, sparse methods are typically implemented in a Petrov-Galerkin fashion utilizing Chebyshev polynomials as trial bases and ultraspherical/Gegenbauer polynomials as test bases, and enforce boundary conditions via row replacement, equivalent to using the test family as $\tau$-polynomials \cite{Olver.2013}.

In this work, we consider the more general framework where $\mathcal Q$ can be any set of polynomials satisfying \eqref{eq:tau polynomials hypothesis}.
Additionally, the polynomial solutions $u\in\polC{N}$ of the $\tau$-perturbed problem do not depend on the choice of test and trial bases \cite{Burns_2026, Vasil_2019}, and the numerical implementation can be done efficiently \cite{Burns_2020}.
In particular, the $\tau$-perturbed problem for the second-order problem \eqref{eq:second-order eigenvalue problem} is defined as follows: given two polynomials $Q_N\in\mathbb R_N[x]$ and $Q_{N-1}\in\mathbb R_{N-1}[x]$ such that ${\rm span} (Q_N, Q_{N-1}) \simeq \polquot{N}{2}$, find $(\lambda, u, \tau) \in \mathbb C\times \polC{N} \times \mathbb C^2$, $\tau = [\tau_1, \tau_2]^T$, such that
\begin{equation}\label{eq:second-order tau}
    \begin{cases}
         \mathcal D^2 u = \lambda u + \tau_1 Q_N + \tau_2 Q_{N-1} &\quad x\in [-1, 1], \\
        u(\pm 1) = 0.
    \end{cases}
\end{equation}
The qualitative behavior of these approximations has been studied in \cite{Charalambides_2008-2} for Jacobi $\tau$-polynomials, i.e., orthogonal polynomials on the interval $[-1, 1]$ associated with the weight $(1-x)^{\alpha} (1+x)^{\beta}$.
In the ultraspherical case $\alpha = \beta = \gamma - 1/2$, the approximate eigenvalues are real, negative and distinct for $-\nicefrac{1}{2} <\gamma \leq \nicefrac{5}{2}$, while for $\gamma > \nicefrac{5}{2}$ pairs of complex-conjugate eigenvalues appear for sufficiently large $N$. 
For collocation discretizations, Weideman and Trefethen~\cite{Weideman_1988} showed that only a fraction of the eigenvalues are accurate, namely those whose eigenfunctions are resolved by the grid, and bounded the spectral radius via the characteristic polynomial.
However, to our knowledge, the asymptotic error of the resolved eigenvalues, and how it depends on the choice of the $\tau$-polynomials, has not been quantified.

The goal of this paper is twofold.
First, in Section~\ref{sec:spectral determinant}, we characterize the $\tau$-approximated spectrum with a resolvent-based spectral determinant property (Theorem~\ref{th:spectral determinat}), and we show that the spectral determinant factorizes into three factors (Theorem~\ref{th:factorization}). Notably, these results hold for any $K$-th order eigenvalue problem of the form \eqref{eq:eigenvalue problem}, and for any choice of the $\tau$-polynomial set \eqref{eq:tau polynomials}.
Second, in Section~\ref {sec:analysis}, we quantitatively study the eigenvalue approximation error for the second-order eigenvalue problem \eqref{eq:second-order eigenvalue problem}, looking at its asymptotic behavior as $N\to\infty$. In particular, we consider the case where the $\tau$-polynomials are Gegenbauer polynomials, i.e., orthogonal polynomials with respect to the weight $(1-x^2)^{\gamma-\nicefrac{1}{2}}$. The analysis of Subsection~\ref{subsec:error analysis} identifies two values of the Gegenbauer parameter $\gamma$ providing better eigenvalue approximations: $\gamma=\nicefrac{1}{2}$ and $\gamma=\nicefrac{3}{2}$, corresponding to Legendre polynomials and their derivatives (Theorem~\ref{th:asymptotic behavior N}, Theorem~\ref{th:main}, and Corollary~\ref{cor:rates}).
This characterizes the asymptotic behavior of the low, resolved part of the spectrum. At the opposite end, in Subsection~\ref{sec:largest}, we study the largest $\tau$-eigenvalues, whose magnitudes grow like $N^4$ and govern the stiffness of the discretization, with implications for the stability of time-stepping methods.
We also present a numerical validation of the theoretical results.

\section{Spectrum characterization: Spectral determinant}\label{sec:spectral determinant}

In the following, let $w_{\mathcal S} \colon [a, b] \to (0, \infty)$ be a weight function, $w_{\mathcal S}\in L^1(a,b)$, and let
\begin{equation*}
    \innprodQ{f}{g} \coloneqq \int_a^b f(x) g(x) w_{\mathcal S}(x) \,{\rm d}x,
\end{equation*}
for all functions $f, g$ belonging to the space
\begin{equation*}
    L_{w_{\mathcal S}}^2(a,b) \coloneqq \left\{ f\colon [a, b]\to\mathbb C\ :\ \int_a^b \lvert f(x)\rvert^2 w_{\mathcal S}(x) \,{\rm d}x < \infty \right\}.
\end{equation*}
The form $\innprodQ{\,\cdot\,}{\,\cdot\,}$ is a symmetric bilinear form on $L^2_{w_{\mathcal S}}(a,b)$, so that $\innprodQ{f}{g\,\tau}=\innprodQ{f}{g}\,\tau$ for $\tau\in\mathbb C$; on real-valued functions it is an inner product.
We will denote by $\{S_n\}_{n\in\mathbb N_0}$, $S_{n} \in\mathbb R_{n}[x]$, a sequence of orthogonal polynomials with respect to the inner product $\innprodQ{\,\cdot\,}{\,\cdot\,}$.
For simplicity of notation, given $f\in L^2_{w_{\mathcal S}}(a,b)$ and two row-vectors of functions
\begin{equation*}
    \mathcal F_1 = \begin{bmatrix}
        F_{1,1} & \dots & F_{1, m}
    \end{bmatrix}, \mathcal F_2 = \begin{bmatrix}
        F_{2,1} & \dots & F_{2, m}
    \end{bmatrix} \in (L^2_{w_{\mathcal S}}(a,b))^m,
\end{equation*}
we will still write $\innprodQ{\,\cdot\,}{\,\cdot\,}$ for the following vectors and matrix:
\begin{equation*}
    \innprodQ{\mathcal F_1^T}{f} = \begin{bmatrix}
        \innprodQ{F_{1,1}}{f} \\
        \vdots \\
        \innprodQ{F_{1,m}}{f}
    \end{bmatrix},
    \qquad\qquad
    \innprodQ{\mathcal F_{2}}{f} = \begin{bmatrix}
        \innprodQ{F_{2,1}}{f} & \dots & \innprodQ{F_{2,m}}{f}
    \end{bmatrix},
\end{equation*}
and
\begin{equation*}
    \innprodQ{\mathcal F_1^T}{\mathcal F_2}
    = \begin{bmatrix}
        \innprodQ{F_{1,1}}{F_{2,1}} & \dots & \innprodQ{F_{1,1}}{F_{2,m}} \\
        \vdots & \ddots & \vdots \\
        \innprodQ{F_{1,m}}{F_{2,1}} & \dots & \innprodQ{F_{1,m}}{F_{2,m}}
    \end{bmatrix}.
\end{equation*}

In the case $[a, b] = [-1, 1]$ and weight $w_{\gamma}(x) = (1-x^2)^{\gamma-\nicefrac{1}{2}}$, $\gamma > -1/2$, we recover the standard Gegenbauer polynomial family $\{\gegen{n}{\gamma}\}_{n\in\mathbb N_0}$, and we simply write $\innprodgegen{\,\cdot\,}{\,\cdot\,}$ to denote the corresponding inner product.
In the case $\gamma = \nicefrac{1}{2}$, the weight is trivial, $w(x)=1$, and we simply denote by $\innLtwo{\,\cdot\,}{\,\cdot\,}$ the corresponding inner product. The corresponding orthogonal polynomials are the Legendre polynomials $\{P_n\}_{n\in\mathbb N_0}$.

The following results characterize the discrete spectrum of the $\tau$-approximation \eqref{eq:tau perturbed problem}, by defining a \emph{spectral determinant} $D_N(\lambda)$ based on the exact resolvent with the physical boundary conditions, i.e., $(\mathcal L_{\mathcal B} - \lambda)^{-1}$.

\begin{theorem}[Spectral determinant]\label{th:spectral determinat}
    Given a linear operator $\mathcal L$ of order $K$ of the form \eqref{eq:linear operator} and a boundary operator $\mathcal B$ of the form \eqref{eq:boundary operator}, let $N \geq K$, let $\mathcal Q= [\begin{matrix}
        Q_N & Q_{N-1} & \dots & Q_{N-K+1}
    \end{matrix}]$ be a set of polynomials satisfying \eqref{eq:tau polynomials hypothesis} and $\{S_n\}_{n\in\mathbb N_0}$, $S_{n} \in\mathbb R_{n}[x]$, be a sequence of orthogonal polynomials with respect to the inner product $\innprodQ{\,\cdot\,}{\,\cdot\,}$, and
    $\mathcal S = [\begin{matrix}
        S_N & S_{N-1} & \dots & S_{N-K+1}
    \end{matrix}]$.
    If $(\lambda, u, \tau)$ is a solution of the $\tau$-perturbed problem \eqref{eq:tau perturbed problem}, such that $\lambda\notin \sigma(\mathcal L_{\mathcal B})$ and $u\in \polC{N}$, $u\neq 0$, 
    then
    \begin{equation}\label{eq:spectral determinant}
        D_N(\lambda) \coloneqq \det \left( \innprodQ{\mathcal S^T}{\mathcal D^K (\mathcal L_{\mathcal B} - \lambda)^{-1} \mathcal Q} \right) = 0.
    \end{equation}
\end{theorem}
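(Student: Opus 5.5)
Since $\lambda\notin\sigma(\mathcal L_{\mathcal B})$, the resolvent $(\mathcal L_{\mathcal B}-\lambda)^{-1}$ is well defined on continuous (in particular polynomial) right-hand sides. Because $u$ satisfies $\mathcal B(u)=0$ and $(\mathcal L-\lambda)u=\mathcal Q\cdot\tau$, uniqueness of the boundary value problem gives the representation
\begin{equation*}
    u = (\mathcal L_{\mathcal B}-\lambda)^{-1}(\mathcal Q\cdot\tau) = \sum_{k=1}^K \tau_k\,(\mathcal L_{\mathcal B}-\lambda)^{-1} Q_{N-k+1}.
\end{equation*}
The plan is to test the degree information $u\in\polC{N}$ against the orthogonal family $\mathcal S$, and so turn it into a linear homogeneous system for $\tau$.

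\textbf{Step 1: $\tau\neq 0$.} If $\tau=0$, then $(\mathcal L_{\mathcal B}-\lambda)u=0$ with $u\neq0$, which means $\lambda\in\sigma(\mathcal L_{\mathcal B})$. This contradicts the hypothesis, so $\tau\neq 0$.

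\textbf{Step 2: degree constraint.} Since $u\in\polC{N}$, we have $\mathcal D^K u\in\polC{N-K}$. The polynomials $S_N,\dots,S_{N-K+1}$ are orthogonal to every polynomial of degree at most $N-K$, and the form is bilinear, so this also holds for complex coefficients. Hence
\begin{equation*}
    0=\innprodQ{\mathcal S^T}{\mathcal D^K u}=\innprodQ{\mathcal S^T}{\mathcal D^K(\mathcal L_{\mathcal B}-\lambda)^{-1}\mathcal Q}\,\tau .
\end{equation*}
Here linearity of $\mathcal D^K$, of the resolvent, and of $\innprodQ{\cdot}{\cdot}$ in its second slot lets $\tau$ be pulled out as a column vector.

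\textbf{Step 3: conclusion.} The $K\times K$ matrix above annihilates the nonzero vector $\tau$, so it is singular, and therefore $D_N(\lambda)=0$.

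The only delicate point is making sure every term is well defined. The resolvent applied to the polynomials $Q_{N-k}$ yields $C^K$ functions, indeed smooth ones, being solutions of a constant-coefficient ODE. Applying $\mathcal D^K$ gives continuous functions on $[a,b]$, and these lie in $L^2_{w_{\mathcal S}}$ because $w_{\mathcal S}\in L^1$. So the entries of the matrix are finite.

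The identification in Step 2 needs $\mathcal D^K$ of the resolvent image to coincide with $\mathcal D^K u$. This holds by the uniqueness argument at the start, since both functions solve the same boundary value problem. Note that the hypothesis on $\mathrm{span}(\mathcal Q)$ is not needed for this direction; it matters only for the converse or for the factorization result.
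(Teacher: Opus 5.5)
Your proof is correct and is essentially the paper's argument. You show $\tau\neq 0$, write $u=(\mathcal L_{\mathcal B}-\lambda)^{-1}\mathcal Q\cdot\tau$, and use the orthogonality of $S_N,\dots,S_{N-K+1}$ to $\mathcal D^K u\in\mathbb C_{N-K}[x]$ to obtain a $K\times K$ matrix that annihilates $\tau$. Your remarks that the matrix entries are finite and that the span hypothesis on $\mathcal Q$ is not needed are accurate, and the paper leaves both implicit.
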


\begin{proof}
    Notice that $\tau\neq 0$, otherwise $\lambda$ and $u$ would satisfy $(\mathcal L-\lambda) u = 0$ and $\mathcal B(u) = 0$, contradicting the hypothesis $\lambda\notin\sigma(\mathcal L_{\mathcal B})$.
    Moreover, since $\lambda$ is not an element of the spectrum of $\mathcal L_{\mathcal B}$, the operator $(\mathcal L_{\mathcal B} - \lambda)^{-1}$ is defined, and, consequently, the $\tau$-perturbed equation in \eqref{eq:tau perturbed problem} can be rewritten as
    \begin{equation*}
        u = (\mathcal{L}_{\mathcal B} - \lambda)^{-1} \mathcal{Q}\cdot \tau.
    \end{equation*}
    Since, by hypothesis, the solution $u$ is a polynomial of degree at most $N$, $\mathcal D^K u$ is a polynomial of degree at most $N-K$. Then, by orthogonality and linearity, we get
    \begin{equation*}
        0 = \innprodQ{\mathcal S^T}{\mathcal D^K u} =  \innprodQ{\mathcal S^T}{\mathcal D^K((\mathcal{L}_{\mathcal B} - \lambda)^{-1} \mathcal{Q} \cdot \tau)} = \innprodQ{\mathcal S^T}{\mathcal D^K (\mathcal L_{\mathcal B} - \lambda)^{-1} \mathcal Q}\cdot \tau.
    \end{equation*}
    Since $\tau\neq 0$, this is equivalent to the spectral determinant condition \eqref{eq:spectral determinant}.
\end{proof}

\begin{corollary}\label{cor:spectral determinant}
    Under the assumptions of Theorem~\ref{th:spectral determinat}, suppose the linear operator $\mathcal L_{\mathcal B}$ is symmetric, i.e., $\innLtwo{\mathcal L u}{v} = \innLtwo{u}{\mathcal L v}$ for all $u, v\in D(\mathcal L_{\mathcal B})$, and that $w_{\mathcal S}\in L^2(a,b)$. Then, $\mathcal L_{\mathcal B}$ has real eigenvalues $\{\lambda_j\}_{j\in\mathbb N}$, and corresponding real-valued eigenfunctions $\{\varphi_j\}_{j\in\mathbb N}$ forming an orthonormal basis of $L^2(a,b)$, i.e., $\innLtwo{\varphi_j}{\varphi_k} = \delta_{jk}$. Moreover, for $\lambda\notin\sigma(\mathcal L_{\mathcal B})$
    \begin{equation}\label{eq:rewriting determinant}
        \innprodQ{\mathcal S^T}{\mathcal D^K (\mathcal L_{\mathcal B} - \lambda)^{-1} \mathcal Q} = \sum_{j\in\mathbb N}   \frac{\innprodQ{\mathcal S^{T}}{\mathcal{D}^{K} \varphi_{j}} \innLtwo{\mathcal{Q}}{\varphi_{j}}}{\lambda_{j} - \lambda },
    \end{equation}
    and the spectral determinant property \eqref{eq:spectral determinant} can be rewritten as
    \begin{equation}
    \label{eq:determinant spectrum}
        \det \left( \sum_{j\in\mathbb N}   \frac{\innprodQ{\mathcal S^{T}}{\mathcal{D}^{K} \varphi_{j}} \innLtwo{\mathcal{Q}}{\varphi_{j}}}{\lambda_{j} - \lambda } \right) \ = \ 0.
    \end{equation}
\end{corollary}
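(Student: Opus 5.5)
The plan has three steps: establish the spectral theory of $\mathcal L_{\mathcal B}$, expand the resolvent applied to each $Q_{N-k}$ in the eigenbasis, and then show that $\mathcal D^K$ followed by the weighted pairing against $S_{N-m}$ can be taken termwise. The determinant identity \eqref{eq:determinant spectrum} then follows at once from Theorem~\ref{th:spectral determinat}.

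\emph{Spectral theory.} I would prove that $\mathcal L_{\mathcal B}$ has compact resolvent. For $\lambda_0\notin\sigma(\mathcal L_{\mathcal B})$ (such $\lambda_0$ exist, since the boundary-value determinant is a nonzero entire function of $\lambda$), the resolvent $(\mathcal L_{\mathcal B}-\lambda_0)^{-1}$ is an integral operator with continuous Green's kernel $G(x,y;\lambda_0)$ on $[a,b]^2$. It is therefore Hilbert--Schmidt, hence compact on $L^2(a,b)$. Symmetry of $\mathcal L_{\mathcal B}$ together with real coefficients makes it self-adjoint on its closure, since the boundary conditions are exactly those making the boundary form vanish. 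The spectral theorem for compact self-adjoint operators then gives real eigenvalues $\lambda_j$, accumulating only at infinity. The eigenfunctions can be taken real-valued, because the coefficients are real and real and imaginary parts of an eigenfunction are eigenfunctions. They form an orthonormal basis of $L^2(a,b)$, because the resolvent is injective.

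\emph{Eigenfunction expansion.} For $\lambda\notin\sigma(\mathcal L_{\mathcal B})$ and each polynomial $Q=Q_{N-k}$, we have $v:=(\mathcal L_{\mathcal B}-\lambda)^{-1}Q=\sum_j \frac{\innLtwo{Q}{\varphi_j}}{\lambda_j-\lambda}\varphi_j$, with convergence in $L^2(a,b)$. This follows by pairing with $\varphi_j$ and using symmetry: $\innLtwo{v}{\varphi_j}(\lambda_j-\lambda)=\innLtwo{(\mathcal L-\lambda)v}{\varphi_j}=\innLtwo{Q}{\varphi_j}$.

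\emph{Interchange with $\mathcal D^K$ and the weighted pairing (main obstacle).} The series converges only in $L^2$, but we need $\mathcal D^K v$ paired with $S_{N-m}$ under the weight $w_{\mathcal S}$. I would remove the derivatives using the equation instead of differentiating the series:
\[
\mathcal D^K v=\mathcal L_K^{-1}\Bigl(Q+\lambda v-\sum_{k<K}\mathcal L_k\mathcal D^k v\Bigr),
\qquad
\mathcal D^K\varphi_j=\mathcal L_K^{-1}\Bigl(\lambda_j\varphi_j-\sum_{k<K}\mathcal L_k\mathcal D^k\varphi_j\Bigr).
\]
The cleanest route is to view $\mathcal D^K(\mathcal L_{\mathcal B}-\lambda)^{-1}$ as a bounded operator on $L^2$. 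Its kernel is $\partial_x^K G$, which equals a bounded kernel plus $\mathcal L_K^{-1}\delta$, so the operator is bounded. Applying a bounded operator to the $L^2$-convergent expansion gives
\[
\mathcal D^K v=\sum_j \frac{\innLtwo{Q}{\varphi_j}}{\lambda_j-\lambda}\,\mathcal D^K\varphi_j \quad\text{in } L^2,
\]
using $\mathcal D^K(\mathcal L_{\mathcal B}-\lambda)^{-1}\varphi_j=\mathcal D^K\varphi_j/(\lambda_j-\lambda)$. Finally, $f\mapsto\innprodQ{S_{N-m}}{f}=\innLtwo{S_{N-m}w_{\mathcal S}}{f}$ is a continuous linear functional on $L^2(a,b)$ precisely because $w_{\mathcal S}\in L^2(a,b)$ and $S_{N-m}$ is bounded. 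This is where that hypothesis is used. It lets us pass the pairing through the sum entrywise, giving \eqref{eq:rewriting determinant}.

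Taking determinants and invoking Theorem~\ref{th:spectral determinat} yields \eqref{eq:determinant spectrum}. The delicate point is the boundedness of $\mathcal D^K(\mathcal L_{\mathcal B}-\lambda)^{-1}$, which rests on the jump structure of the Green's function. If that route proves awkward, the fallback is the identity above: it reduces everything to lower derivatives $\mathcal D^k v$, $k<K$, which are continuous integral operators applied to $Q$.
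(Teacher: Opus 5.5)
Your proposal is correct and shares the paper's skeleton: self-adjoint spectral theory, the $L^2$ eigenfunction expansion of $(\mathcal L_{\mathcal B}-\lambda)^{-1}\mathcal Q$, and $L^2$-continuity of $\innprodQ{S_{N-k}}{\,\cdot\,}$ coming from $w_{\mathcal S}\in L^2(a,b)$. You justify the two substantive steps differently, however.

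For the spectral facts, the paper cites the classical theory of self-adjoint boundary value problems and uses the Green's function expansion $\mathcal K_\lambda(x,y)=\sum_j\varphi_j(x)\varphi_j(y)/(\lambda_j-\lambda)$ in $L^2((a,b)^2)$. You instead derive them from a Hilbert--Schmidt resolvent and obtain the coefficients by pairing with $\varphi_j$. This is self-contained but leaves some routine checks implicit. Symmetry with real coefficients excludes non-real eigenvalues, so $E\not\equiv 0$ and a real $\lambda_0$ in the resolvent set exists. For such $\lambda_0$, writing $R_{\lambda}\coloneqq(\mathcal L_{\mathcal B}-\lambda)^{-1}$, symmetry directly gives $\innLtwo{R_{\lambda_0}f}{g}=\innLtwo{f}{R_{\lambda_0}g}$ for continuous $f,g$, so no statement about closures is needed.

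The more significant difference is the interchange step. The paper only notes that $\innprodQ{S_{N-k}}{\,\cdot\,}$ is $L^2$-continuous, and then applies $\innprodQ{\mathcal S^T}{\mathcal D^K\,\cdot\,}$ termwise to the $L^2$ series for $v=R_\lambda Q$. That tacitly requires the $K$-th derivatives of the partial sums to converge in $L^2$, which continuity of the pairing alone does not give. Your identity $T\coloneqq\mathcal D^KR_\lambda=\mathcal L_K^{-1}\bigl(I+\lambda R_\lambda-\sum_{k<K}\mathcal L_k\mathcal D^kR_\lambda\bigr)$ supplies this. Since each $\mathcal D^kR_\lambda$ with $k\le K-1$ has a bounded kernel, $T$ is bounded on $L^2$. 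You then apply $T$ to the expansion of $Q$ rather than of $v$: if $Q_n$ are the partial sums for $Q$, the partial sums for $v$ are $v_n=R_\lambda Q_n$, hence $\mathcal D^Kv_n=TQ_n\to TQ=\mathcal D^Kv$ in $L^2$. This is precisely what makes that step rigorous.

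I would present this identity as the proof of boundedness rather than as a fallback. The ``bounded kernel plus $\mathcal L_K^{-1}\delta$'' description is the same fact stated heuristically.
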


\begin{proof}
    Since $\mathcal L_{\mathcal B}$ is symmetric, and $\mathcal L$ and $\mathcal B$ have real coefficients, \eqref{eq:eigenvalue problem} is a self-adjoint boundary value problem on a finite interval~\cite{BOOK_Coddington_1955, BOOK_Weidmann_1980}.
    Hence its eigenvalues are real, with $\lvert\lambda_j\rvert\to\infty$, its eigenfunctions can be chosen real-valued and form an orthonormal basis of $L^2(a,b)$, and, for every $\lambda\notin\sigma(\mathcal L_{\mathcal B})$, the kernel of $(\mathcal L_{\mathcal B}-\lambda)^{-1}$, i.e., the Green's function, is
    \begin{equation*}
        \mathcal K_{\lambda}(x,y)=\sum_{j\in\mathbb N}\frac{\varphi_j(x)\,\varphi_j(y)}{\lambda_j-\lambda}
        \qquad\text{in }L^2\bigl((a,b)^2\bigr).
    \end{equation*}
    Therefore, the following integral representation holds:
    \begin{equation}\label{eq:integral equation solution}
        (\mathcal L_{\mathcal B}-\lambda)^{-1}\mathcal Q
        =\int_a^b\mathcal K_{\lambda}(x,y)\,\mathcal Q(y)\,dy
        =\sum_{j\in\mathbb N}\frac{\varphi_j\,\innLtwo{\mathcal Q}{\varphi_j}}{\lambda_j-\lambda}
        \qquad\text{in }L^2(a,b).
    \end{equation}
    By the fact that $w_{\mathcal S}\in L^2(a,b)$ and the Cauchy--Schwarz inequality, each $\innprodQ{S_{N-k}}{\,\cdot\,}$,  $k = 0, \dots, K-1$, is continuous on $L^2(a,b)$, so $\innprodQ{\mathcal S^T}{\mathcal D^K\,\cdot\,}$ can be applied to~\eqref{eq:integral equation solution} term by term.
    This gives~\eqref{eq:rewriting determinant}, and combining it with~\eqref{eq:spectral determinant} gives~\eqref{eq:determinant spectrum}.
\end{proof}

In particular, for the second-order problem \eqref{eq:second-order eigenvalue problem}, the sets of eigenvalues and eigenvectors are given by \eqref{eq:second-order exact}, and the spectral determinant property is
\begin{equation}\label{eq:spectral determinant 2x2}
    \det \begin{bmatrix}
        \displaystyle\sum_{j\in\mathbb N} \frac{\innprodQ{S_N}{\mathcal D^2 \varphi_j} \innLtwo{Q_N}{\varphi_j}}{\lambda_j-\lambda} & \displaystyle\sum_{j\in\mathbb N} \frac{\innprodQ{S_N}{\mathcal D^2 \varphi_j} \innLtwo{Q_{N-1}}{\varphi_j}}{\lambda_j-\lambda} \smallskip\\
        \displaystyle\sum_{j\in\mathbb N} \frac{\innprodQ{S_{N-1}}{\mathcal D^2 \varphi_j} \innLtwo{Q_N}{\varphi_j}}{\lambda_j-\lambda} & \displaystyle\sum_{j\in\mathbb N} \frac{\innprodQ{S_{N-1}}{\mathcal D^2 \varphi_j} \innLtwo{Q_{N-1}}{\varphi_j}}{\lambda_j-\lambda}
    \end{bmatrix} = 0.
\end{equation}
Indeed, the set of eigenfunctions $\{\varphi_j\}_{j\in\mathbb N}$ defined by \eqref{eq:second-order exact} is a complete orthonormal basis for $L^2 ([-1, 1])$ equipped with $\innLtwo{\,\cdot\,}{\,\cdot\,}$.

\subsection{Factorization of the spectral determinant}

In this subsection, we present a second characterization of the spectral determinant that combines two main ingredients: a fundamental system of solutions of $(\mathcal L -\lambda) u = 0$, and an explicit inverse of $\mathcal L - \lambda$ on the polynomial space of degree at most $N$, given by a terminating Neumann series. In particular, we get a characterization of the $\tau$-eigenvalues corresponding to polynomial solutions $u$ that does not involve the set of orthogonal polynomials $\mathcal S$.

Given a linear operator $\mathcal L$ of order $K$ and $\lambda\in \mathbb C$, we denote by $\Phi_\lambda$ a fundamental system of solutions of $(\mathcal L-\lambda)\varphi = 0$, i.e., $K$ linearly independent solutions:
\begin{equation}\label{eq:fundamental solution basis}
    \Phi_{\lambda} = [\begin{array}{cccc}
        \varphi_{1,\lambda} & \varphi_{2, \lambda} & \dots & \varphi_{K, \lambda}
    \end{array}]
    \qquad\text{with}\qquad
    (\mathcal L-\lambda) \varphi_{k, \lambda} = 0,
    \qquad k = 1, \dots, K.
\end{equation}
In the following, we denote by $\mathcal B(\Phi_{\lambda})$ the following boundary condition matrix
\begin{equation*}
    \mathcal B(\Phi_{\lambda}) \coloneqq [\begin{array}{cccc}
        \mathcal B(\varphi_{1,\lambda}) & \mathcal B(\varphi_{2, \lambda}) & \dots & \mathcal B(\varphi_{K, \lambda})
    \end{array}].
\end{equation*}
The following lemma is a classical characterization of the spectrum $\sigma(\mathcal L_{\mathcal B})$ \cite{Birkhoff_1908, BOOK_Coddington_1955}.

\begin{lemma}\label{lem:characteristic determinant}
    Given a linear operator $\mathcal L$ of order $K$ of the form \eqref{eq:linear operator} together with a boundary operator $\mathcal B$ of the form \eqref{eq:boundary operator}, let $\lambda\in\mathbb C$ and let $\Phi_{\lambda}$ be the basis of fundamental solutions defined by \eqref{eq:fundamental solution basis}. Then, $\lambda \in\sigma(\mathcal L_{\mathcal B})$ if and only if $\det\mathcal B(\Phi_\lambda) = 0$.
\end{lemma}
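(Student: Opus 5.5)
The plan is to reduce the eigenvalue condition to a finite-dimensional linear algebra statement about the coefficients of $u$ in the basis $\Phi_\lambda$. I take $\sigma(\mathcal L_{\mathcal B})$ to be the point spectrum, i.e., the set of $\lambda$ for which \eqref{eq:eigenvalue problem} has a nontrivial solution $u\in C^K([a,b])$.

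The first step is to record the standard facts about the homogeneous equation. Since $\mathcal L$ has constant real coefficients and $\mathcal L_K\neq 0$, the equation $(\mathcal L-\lambda)u=0$ can be divided by $\mathcal L_K$ to obtain a normal linear ODE of order $K$. By existence and uniqueness for linear ODEs, its solution space in $C^K([a,b])$, and in fact in $C^\infty$, has dimension exactly $K$. Concretely, the map
\[
u\mapsto \bigl(u(a),\mathcal D u(a),\dots,\mathcal D^{K-1}u(a)\bigr)
\]
is an isomorphism onto $\mathbb C^K$. Hence any fundamental system $\Phi_\lambda$ as in \eqref{eq:fundamental solution basis} is a basis of this space, and every solution can be written uniquely as $u=\Phi_\lambda c=\sum_{k}c_k\varphi_{k,\lambda}$ with $c\in\mathbb C^K$.

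The second step uses the linearity of $\mathcal B$, which is evident from \eqref{eq:boundary operator}. It gives
\[
\mathcal B(\Phi_\lambda c)=\sum_k c_k\,\mathcal B(\varphi_{k,\lambda})=\mathcal B(\Phi_\lambda)\,c,
\]
a product of a $K\times K$ matrix with $c$. Therefore $\lambda$ is an eigenvalue if and only if there exists $c\neq 0$ with $\mathcal B(\Phi_\lambda)c=0$. Here $u\neq0$ is equivalent to $c\neq0$ because the $\varphi_{k,\lambda}$ are linearly independent. This is in turn equivalent to $\det\mathcal B(\Phi_\lambda)=0$, which proves both directions at once.

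There is essentially no obstacle; the only point needing care is the dimension count of the solution space together with the uniqueness of the representation, which I take from the classical theory \cite{BOOK_Coddington_1955}. I would also note that the condition $\det\mathcal B(\Phi_\lambda)=0$ does not depend on the chosen fundamental system. A different fundamental system equals $\Phi_\lambda M$ for some invertible $M$, and then $\mathcal B(\Phi_\lambda M)=\mathcal B(\Phi_\lambda)M$, so the determinant changes only by the nonzero factor $\det M$.
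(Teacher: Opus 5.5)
Your proposal is correct and matches the paper's proof: both write every solution of $(\mathcal L-\lambda)u=0$ as $\Phi_\lambda c$, use linearity to turn $\mathcal B(u)=0$ into $\mathcal B(\Phi_\lambda)c=0$, and conclude via the determinant. Your explicit dimension count for the solution space is left implicit in the paper, and your basis-independence observation matches the remark the paper makes right after the lemma.
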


\begin{proof}
    Note that nontrivial solutions of $(\mathcal L -\lambda) u = 0$ are $u = \Phi_{\lambda}\,a$ with $a\in\mathbb C^{K}$, $a\neq 0$. The coefficient vector $a$ is then constrained by the boundary conditions: $0 = \mathcal B(u) = \mathcal B(\Phi_{\lambda}) \cdot a$ by linearity.
    Then, the result follows directly from the fact that $\Phi_{\lambda}$ is a basis for the kernel of $\mathcal L - \lambda$. Indeed,
    \begin{equation*}
        \lambda \in \sigma(\mathcal L_{\mathcal B})
        \qquad\Leftrightarrow\qquad
        \exists\, a\neq 0 \text{ such that } \mathcal B(\Phi_{\lambda}) \cdot a = 0
        \qquad\Leftrightarrow\qquad
        \det\mathcal B(\Phi_{\lambda}) = 0.
    \end{equation*}
\end{proof}

We remark that the result of Lemma~\ref{lem:characteristic determinant} is independent of the choice of the specific basis $\Phi_{\lambda}$. More specifically, if $\Psi_{\lambda} = \Phi_{\lambda} A$, $\det A\neq 0$, is another basis, then by linearity $\det\mathcal B(\Psi_{\lambda}) = \det A \det \mathcal B(\Phi_{\lambda})$.

\begin{lemma}\label{lem:inverse G}
    Given a linear operator $\mathcal L$ of order $K$ of the form \eqref{eq:linear operator}, let $\lambda\neq \mathcal L_0$. Then, the operator $(\mathcal L-\lambda)|_{\polC{N}}$ is invertible, with inverse given by the following finite geometric sum:
    \begin{equation}\label{eq:inverse G}
        \mathcal G_{\lambda}^N \coloneqq - \sum_{n = 0}^N \frac{(\mathcal L - \mathcal L_0)^n}{(\lambda - \mathcal L_0)^{n+1}}.
    \end{equation}
\end{lemma}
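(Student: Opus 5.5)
The plan is to write $\mathcal L - \lambda = (\mathcal L - \mathcal L_0) - (\lambda - \mathcal L_0)$ and to exploit the fact that $\mathcal M \coloneqq \mathcal L - \mathcal L_0 = \sum_{k=1}^K \mathcal L_k \mathcal D^k$ is nilpotent on $\polC{N}$. Every term of $\mathcal M$ contains at least one derivative, so $\mathcal M$ maps $\polC{m}$ into $\polC{m-1}$ for every $m\geq 1$ and sends constants to $0$. Iterating this, $\mathcal M^{n}$ lowers the degree by at least $n$, and therefore $\mathcal M^{N+1} p = 0$ for every $p\in\polC{N}$. We will also note that $\mathcal M$, and hence $\mathcal G_\lambda^N$, maps $\polC{N}$ into itself, so $\mathcal G_\lambda^N$ is a well-defined operator on $\polC{N}$.

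Next, set $\mu \coloneqq \lambda - \mathcal L_0 \neq 0$. On $\polC{N}$ we have $\mathcal L - \lambda = \mathcal M - \mu = -\mu(I - \mathcal M/\mu)$. The standard telescoping identity for a finite geometric sum gives
\begin{equation*}
(I - \mathcal M/\mu)\sum_{n=0}^N (\mathcal M/\mu)^n = I - (\mathcal M/\mu)^{N+1} = I \quad\text{on } \polC{N},
\end{equation*}
and the same holds with the factors in the other order, since all terms are polynomials in $\mathcal M$ and therefore commute. Multiplying through by $-1/\mu$ yields $(\mathcal L-\lambda)\,\mathcal G_\lambda^N = \mathcal G_\lambda^N\,(\mathcal L-\lambda) = I$ on $\polC{N}$. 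So $\mathcal G_\lambda^N$ is a two-sided inverse, and the restriction $(\mathcal L-\lambda)|_{\polC{N}}$ is invertible. Alternatively, invertibility follows because $\mathcal L-\lambda$ acts as an upper triangular operator on the monomial basis with constant diagonal $-\mu\neq0$.

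There is no real obstacle in this argument. The only point needing care is the nilpotency claim, specifically that $\mathcal M$ strictly lowers degree. This relies on $\mathcal M$ having no zeroth-order term, which holds by construction, and on the coefficients being constant, as assumed in \eqref{eq:linear operator}.
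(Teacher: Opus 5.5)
Your proposal is correct and follows essentially the same route as the paper: it splits $\mathcal L-\lambda=(\mathcal L-\mathcal L_0)-(\lambda-\mathcal L_0)$, telescopes the finite geometric sum, and removes the remainder using $(\mathcal L-\mathcal L_0)^{N+1}P=0$ for $P\in\polC{N}$. Your extra remarks go slightly beyond the paper, which only verifies the right-inverse identity $(\mathcal L-\lambda)\mathcal G_\lambda^N P=P$: you note that the factors commute, so the inverse is two-sided, and that $\mathcal G_\lambda^N$ maps $\polC{N}$ into itself.
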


\begin{proof}
    To prove this lemma, we show that $(\mathcal L-\lambda) \mathcal G_{\lambda}^N P = P$ for all polynomials $P\in\polC{N}$. Using the definition of $\mathcal G_{\lambda}^N$ \eqref{eq:inverse G} and the fact that $\mathcal L - \lambda = (\mathcal L - \mathcal L_0) - (\lambda - \mathcal L_0)$, we get
    \begin{equation*}
        (\mathcal L-\lambda) \mathcal G_{\lambda}^N P = \sum_{n=0}^N \frac{(\mathcal L - \mathcal L_0)^n}{(\lambda - \mathcal L_0)^{n}} P - \sum_{n=0}^N \frac{(\mathcal L - \mathcal L_0)^{n+1}}{(\lambda - \mathcal L_0)^{n+1}} P
        = P - \frac{(\mathcal L - \mathcal L_0)^{N+1}}{(\lambda - \mathcal L_0)^{N+1}} P = P.
    \end{equation*}
    The last equality follows from the fact that $(\mathcal L - \mathcal L_0)^{N+1} P = 0$ since $P\in\polC{N}$ and the linear operator $\mathcal L - \mathcal L_0$ does not contain any constant term.
\end{proof}

\begin{theorem}[Factorization]\label{th:factorization}
    Given a linear operator $\mathcal L$ of order $K$ of the form \eqref{eq:linear operator} and a boundary operator $\mathcal B$ of the form \eqref{eq:boundary operator}, let $N \geq K$, let $\mathcal Q= [\begin{matrix}
        Q_N & Q_{N-1} & \dots & Q_{N-K+1}
    \end{matrix}]$ be a set of polynomials satisfying \eqref{eq:tau polynomials hypothesis}, let $\lambda\notin\sigma(\mathcal L_{\mathcal B})$, $\lambda\neq \mathcal L_0$, let $\Phi_{\lambda}$ be the basis of fundamental solutions defined by \eqref{eq:fundamental solution basis} and let $\mathcal G_{\lambda}^N$ be 
    defined by \eqref{eq:inverse G}.
    \begin{enumerate}
        \item [(a)] Then, \begin{equation*}
        (\mathcal L_{\mathcal B} -\lambda)^{-1} \mathcal Q = \mathcal G_{\lambda}^N \mathcal Q - \Phi_{\lambda} (\mathcal B(\Phi_{\lambda}))^{-1} \mathcal B(\mathcal G_{\lambda}^N \mathcal Q).
    \end{equation*}
        \item [(b)] \label{it:factorization} If $\mathcal S$ and $\innprodQ{\,\cdot\,}{\,\cdot\,}$ are defined as in Theorem~\ref{th:spectral determinat}, the spectral determinant \eqref{eq:spectral determinant} factorizes as
        \begin{equation}\label{eq:factorization}
            D_N(\lambda)
            =(-1)^K\,
            \frac{A_N(\lambda) F_N(\lambda)}{E(\lambda)},
        \end{equation}
        where $A_N(\lambda) \coloneqq \det\innprodQ{\mathcal S^T}{\mathcal D^K\Phi_\lambda}$, $F_N(\lambda) \coloneqq \det\mathcal B(\mathcal G_\lambda^N\mathcal Q)$, and $E(\lambda) \coloneqq \det\mathcal B(\Phi_\lambda)$.
    \end{enumerate}
\end{theorem}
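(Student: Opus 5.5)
For part (a), I will check directly that the right-hand side has the two properties that determine $(\mathcal L_{\mathcal B}-\lambda)^{-1}\mathcal Q$ uniquely. Set
\[
U \coloneqq \mathcal G_\lambda^N\mathcal Q - \Phi_\lambda(\mathcal B(\Phi_\lambda))^{-1}\mathcal B(\mathcal G_\lambda^N\mathcal Q),
\]
read columnwise as a row vector of $K$ functions.

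The matrix $\mathcal B(\Phi_\lambda)$ is invertible by Lemma~\ref{lem:characteristic determinant}, since $\lambda\notin\sigma(\mathcal L_{\mathcal B})$. Lemma~\ref{lem:inverse G} applies because $\lambda\neq\mathcal L_0$ and $\mathcal Q\subset\polC{N}$, so $(\mathcal L-\lambda)\mathcal G_\lambda^N\mathcal Q=\mathcal Q$. Each column of $\Phi_\lambda$ lies in the kernel of $\mathcal L-\lambda$, so $(\mathcal L-\lambda)U=\mathcal Q$.

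Linearity of $\mathcal B$ gives
\[
\mathcal B(U)=\mathcal B(\mathcal G_\lambda^N\mathcal Q)-\mathcal B(\Phi_\lambda)(\mathcal B(\Phi_\lambda))^{-1}\mathcal B(\mathcal G_\lambda^N\mathcal Q)=0.
\]
Hence $U\in D(\mathcal L_{\mathcal B})$; it is smooth, since polynomials and solutions of a constant-coefficient ODE are smooth. Because $\mathcal L_{\mathcal B}-\lambda$ is injective, $U=(\mathcal L_{\mathcal B}-\lambda)^{-1}\mathcal Q$.

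For part (b), I will apply $\innprodQ{\mathcal S^T}{\mathcal D^K\,\cdot\,}$ to the identity in (a). The degree count is the key step. Since $\mathcal G_\lambda^N$ maps $\polC{N}$ to itself, $\mathcal D^K\mathcal G_\lambda^N\mathcal Q$ has degree at most $N-K$. The entries of $\mathcal S$ are $S_N,\dots,S_{N-K+1}$, all orthogonal to $\pol{N-K}$, so that term vanishes. Therefore
\[
\innprodQ{\mathcal S^T}{\mathcal D^K(\mathcal L_{\mathcal B}-\lambda)^{-1}\mathcal Q}
= -\,\innprodQ{\mathcal S^T}{\mathcal D^K\Phi_\lambda}\,(\mathcal B(\Phi_\lambda))^{-1}\,\mathcal B(\mathcal G_\lambda^N\mathcal Q).
\]
Moving the constant matrices outside the bilinear form is justified because the form is bilinear, not sesquilinear, as the paper notes. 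All three factors are $K\times K$. Taking determinants and using multiplicativity gives the sign $(-1)^K$ together with $A_N(\lambda)\,E(\lambda)^{-1}F_N(\lambda)$, which is \eqref{eq:factorization}.

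The only delicate points are bookkeeping. I need to check the dimensions: $\mathcal B$ applied to a row of $K$ functions yields a $K\times K$ matrix, and $\innprodQ{\mathcal S^T}{\Phi_\lambda}$ is also $K\times K$. I also need that $\Phi_\lambda$, which consists of exponential-polynomials, lies in $L^2_{w_{\mathcal S}}$. This follows from continuity on $[a,b]$ together with $w_{\mathcal S}\in L^1$. I expect no real obstacle beyond these checks.
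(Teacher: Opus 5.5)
Your proposal is correct and follows essentially the same route as the paper. For (a), the paper writes the general solution $\Phi_\lambda a+\mathcal G_\lambda^N\mathcal Q\cdot\tau$ and solves the boundary conditions for $a$; you instead verify the candidate and invoke injectivity of $\mathcal L_{\mathcal B}-\lambda$, which is the same argument run in reverse. For (b), you use exactly the paper's ingredients: the degree count $\mathcal D^K\mathcal G_\lambda^N\mathcal Q\in\polC{N-K}$, orthogonality of $\mathcal S$ to that space, and multiplicativity of the determinant.
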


\begin{proof}
    Given $\tau\in\mathbb C^K$, $\tau\neq 0$, the general solution of $(\mathcal L - \lambda) u = \mathcal Q\cdot \tau$ is
    \begin{equation*}
        u = \Phi_{\lambda} \cdot a + \mathcal G_{\lambda}^N \mathcal Q \cdot \tau,
        \qquad a\in\mathbb C^K,
    \end{equation*}
    since $\mathcal G_{\lambda}^N \mathcal Q \cdot \tau$ is a particular solution by Lemma~\ref{lem:inverse G}.
    By applying the boundary conditions
    $\mathcal B(u) = 0$, by linearity we have
    \begin{equation*}
        \mathcal B(\Phi_{\lambda}) \cdot a + \mathcal B(\mathcal G_{\lambda}^N \mathcal Q) \cdot \tau = 0
        \qquad\Rightarrow\qquad
        a =  - \mathcal B(\Phi_{\lambda})^{-1} \mathcal B(\mathcal G_{\lambda}^N \mathcal Q) \cdot \tau.
    \end{equation*}
    Notice that $\mathcal B(\Phi_{\lambda})$ is invertible by Lemma~\ref{lem:characteristic determinant}.
    The solution with boundary conditions is thus
    \begin{equation*}
        u = (\mathcal L_{\mathcal B} - \lambda)^{-1} \mathcal Q \cdot \tau = \left(- \Phi_{\lambda} \mathcal B(\Phi_{\lambda})^{-1} \mathcal B(\mathcal G_{\lambda}^N \mathcal Q) + \mathcal G_{\lambda}^N \mathcal Q \right) \cdot \tau.
    \end{equation*}
    Since $\tau\in\mathbb C^K$, $\tau\neq 0$, is arbitrary, we get (a).
    Finally,
    \begin{equation*}
        \innprodQ{\mathcal S^T}{\mathcal D^K (\mathcal L_{\mathcal B} - \lambda)^{-1} \mathcal Q}
        = - \innprodQ{\mathcal S^T}{\mathcal D^K \Phi_{\lambda}} (\mathcal B(\Phi_{\lambda}))^{-1} \mathcal B(\mathcal G_{\lambda}^N \mathcal Q),
    \end{equation*}
    because $\mathcal D^K \mathcal G_{\lambda}^N \mathcal Q\in\pol{N-K}$. This leads to (b).
\end{proof}

\begin{corollary}\label{cor:characterization eigenvalues}
    Given a linear operator $\mathcal L$ of order $K$ and a boundary operator $\mathcal B$, let $\lambda\neq \mathcal L_0$. Then, $\lambda$ is an eigenvalue of the $\tau$-perturbed problem \eqref{eq:tau perturbed problem} with polynomial eigenfunction $u\neq 0$ if and only if
    \begin{equation*}
        F_N(\lambda) = \det\mathcal B(\mathcal G_{\lambda}^N \mathcal Q) = 0.
    \end{equation*}
    In that case, $u = \mathcal G_{\lambda}^N \mathcal Q\cdot \tau$ and $\tau\in\ker \mathcal B(\mathcal G_{\lambda}^N \mathcal Q)\setminus \{0\}$.
\end{corollary}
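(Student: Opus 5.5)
The plan is to work directly from Lemma~\ref{lem:inverse G} and avoid the resolvent of $\mathcal L_{\mathcal B}$ altogether. This matters because the corollary does not assume $\lambda\notin\sigma(\mathcal L_{\mathcal B})$. The key observation is that when $\lambda\neq\mathcal L_0$, the operator $(\mathcal L-\lambda)|_{\polC{N}}$ is a bijection of $\polC{N}$ onto itself with inverse $\mathcal G_\lambda^N$. This holds because $\mathcal L$ has constant coefficients, so it maps $\polC{N}$ into $\polC{N}$, and Lemma~\ref{lem:inverse G} gives a right inverse on this finite-dimensional space, which is therefore a two-sided inverse.

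For the forward direction, let $(\lambda,u,\tau)$ solve \eqref{eq:tau perturbed problem} with $u\in\polC{N}$ and $u\neq 0$. Each $Q_{N-k}$ lies in $\pol{N}$, so $\mathcal Q\cdot\tau\in\polC{N}$. Applying $\mathcal G_\lambda^N$ to $(\mathcal L-\lambda)u=\mathcal Q\cdot\tau$ and using injectivity on $\polC{N}$ gives $u=\mathcal G_\lambda^N\mathcal Q\cdot\tau$. Since $u\neq0$, we must have $\tau\neq0$. By linearity of $\mathcal B$, the boundary condition becomes $0=\mathcal B(u)=\mathcal B(\mathcal G_\lambda^N\mathcal Q)\cdot\tau$. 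Thus $\tau$ is a nonzero vector in the kernel of the $K\times K$ matrix $\mathcal B(\mathcal G_\lambda^N\mathcal Q)$, and so $F_N(\lambda)=0$.

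For the converse, suppose $F_N(\lambda)=0$. Choose $\tau\in\ker\mathcal B(\mathcal G_\lambda^N\mathcal Q)\setminus\{0\}$ and set $u\coloneqq\mathcal G_\lambda^N\mathcal Q\cdot\tau\in\polC{N}$. Lemma~\ref{lem:inverse G} gives $(\mathcal L-\lambda)u=\mathcal Q\cdot\tau$, and the choice of $\tau$ gives $\mathcal B(u)=0$. It remains to check that $u\neq0$. This is the only point where hypothesis \eqref{eq:tau polynomials hypothesis} enters. If $u=0$, then $\mathcal Q\cdot\tau=(\mathcal L-\lambda)u=0$. But the components of $\mathcal Q$ are linearly independent, since their span is isomorphic to the $K$-dimensional space $\polquot{N}{K}$, so $\mathcal Q\cdot\tau=0$ forces $\tau=0$, a contradiction. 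The same computation gives the stated description of the eigenpairs: every solution has $u=\mathcal G_\lambda^N\mathcal Q\cdot\tau$ with $\tau\in\ker\mathcal B(\mathcal G_\lambda^N\mathcal Q)\setminus\{0\}$.

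I expect the main subtlety, rather than a genuine obstacle, to be the injectivity of $\mathcal G_\lambda^N$ on $\mathcal Q$. I would settle it as above through the linear independence of the $\tau$-polynomials, since invertibility of $\mathcal L-\lambda$ on $\polC{N}$ then transfers it to $\mathcal G_\lambda^N\mathcal Q$. I would also stress that the argument never uses $\lambda\notin\sigma(\mathcal L_{\mathcal B})$. Consistency with Theorem~\ref{th:factorization} is a side remark: away from $\sigma(\mathcal L_{\mathcal B})$ the condition $F_N(\lambda)=0$ agrees with the factor appearing in \eqref{eq:factorization}.
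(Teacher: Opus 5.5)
Your proof is correct. It is essentially the argument the paper has in mind: the paper gives no separate proof and rests the corollary on Lemma~\ref{lem:inverse G} together with the particular-solution and boundary-condition step in the proof of Theorem~\ref{th:factorization}. Your direct version also makes explicit two points the paper leaves implicit. It never uses $\lambda\notin\sigma(\mathcal L_{\mathcal B})$, so it covers the corollary exactly as stated. It also verifies $u\neq 0$ in the converse through the linear independence of the $\tau$-polynomials $\mathcal Q$.
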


\begin{remark}[Poles and zeros of the spectral determinant]
    The factorization \eqref{eq:factorization} separates the spectral determinant $D_N(\lambda)$ into three parts:
    (1) the \emph{exact boundary determinant} $E(\lambda)$ vanishes exactly at the exact eigenvalues $\lambda\in\sigma(\mathcal L_{\mathcal B})$ (Lemma~\ref{lem:characteristic determinant}), which are therefore poles of $D_N(\lambda)$ unless cancelled by zeros of the numerator;
    (2) the \emph{finite tau determinant} $F_N(\lambda)$ vanishes at the numerical eigenvalues, i.e., values $\lambda\in \mathbb C$, $\lambda\neq \mathcal L_0$, such that there exists $u\in\polC{N}$, $u\neq 0$, and $\tau\in\mathbb C^K$ and $(\lambda, u, \tau)$ is a solution of the $\tau$-perturbed problem \eqref{eq:tau perturbed problem};
    (3) the \emph{auxiliary moment determinant} $A_N(\lambda)$ determines additional (spurious) zeros of $D_N(\lambda)$, which are not $\tau$-eigenvalues.
    This shows that the spectral determinant condition \eqref{eq:spectral determinant} is a necessary but not sufficient condition for characterizing numerical eigenvalues $\lambda$ and polynomial solution $u$.
\end{remark}

\begin{remark}[Dependence on $\mathcal S$]\label{rem:dependence S}
    While both the spectral and auxiliary determinants $D_N(\lambda)$ and $A_N(\lambda)$ depend on the choice of the test polynomials $\mathcal S$, the exact and finite tau ones $E(\lambda)$ and $F_N(\lambda)$ do not. This is expected since both exact and numerical eigenvalues are independent of the choice of $\mathcal S$ \cite{Burns_2026}. In particular, Corollary~\ref{cor:characterization eigenvalues} is the eigenvalue counterpart of the solvability criterion for $\tau$-perturbed boundary value problems in \cite[Lemma~2.3]{Burns_2026}.
\end{remark}

\section{Analysis of the spectral determinant}\label{sec:analysis}

Theorem~\ref{th:spectral determinat} and Theorem~\ref{th:factorization} hold for any differential eigenvalue problem \eqref{eq:eigenvalue problem}, any orthogonal family of test polynomials $\mathcal S$ and any set of $\tau$-polynomials $\mathcal Q$.
In this section, we analyze the spectral determinant in the following setting: we consider the second-order problem \eqref{eq:second-order eigenvalue problem} together with its $\tau$-perturbed version \eqref{eq:second-order tau} with $\tau$-polynomials from the family of Gegenbauer polynomials with parameter $\gamma$, i.e., $\mathcal Q = [\begin{matrix}
    \gegen{N}{\gamma} & \gegen{N-1}{\gamma}
\end{matrix}]$, where $\{\gegen{N}{\gamma}\}_{N\in\mathbb N_0}$ is the family of polynomials orthogonal with respect to the weight $w_{\gamma}=(1-x^2)^{\gamma-\nicefrac{1}{2}}$. 
By parity, this choice, together with $\mathcal S = \mathcal Q$ below, makes the $2\times 2$ matrix in \eqref{eq:spectral determinant 2x2} diagonal, while still covering a one-parameter family that includes Chebyshev and Legendre.
The main properties of Gegenbauer polynomials we will use in this paper are summarized in Appendix~\ref{app:properties Gegenbauer}. In particular, we use a nonstandard normalization for Gegenbauer polynomials in the spirit of \cite{Charalambides_2008-2, Charalambides_2008}.
Our goal is to analytically characterize how the quantitative approximation properties of the spectrum of the $\tau$-problem depend on the parameter $\gamma$.

\subsection{Spectral determinant and eigenvalue error analysis}\label{subsec:error analysis}

From now on, we take the test polynomials equal to the $\tau$-polynomials, with their own inner product:
\[
\mathcal S=\mathcal Q=\big[G_N^{(\gamma)}\ \ G_{N-1}^{(\gamma)}\big],
\qquad
\langle\,\cdot\,,\,\cdot\,\rangle_{\mathcal S} = \langle\,\cdot\,,\,\cdot\,\rangle_\gamma ,
\]
so that a single family of polynomials appears in the spectral determinant. Remark~\ref{rem:legendre tests} compares with Legendre test polynomials.
For the sake of brevity, from now on, we use the following notation for the parity conditions: $j\equiv N\,(2)$ if $j$ and $N$ have the same parity, $j\not\equiv N\,(2)$ otherwise.

\begin{proposition}[Second-order Gegenbauer spectral determinant]\label{prop:determinant}
Let $(\lambda,u,\tau)$ solve the $\tau$-perturbed problem \eqref{eq:second-order tau} with $\lambda\notin\sigma(\mathcal D^2_{\rm Dir})$ and $u$ a polynomial of degree at most $N$, and let $\{(\lambda_j, \varphi_j)\}_{j\in\mathbb N}$ be the set of eigenpairs defined by \eqref{eq:second-order exact} associated with the second-order problem \eqref{eq:second-order eigenvalue problem}. Then
\begin{equation}\label{eq:determinant}
\begin{aligned}
&\Bigg(\sum_{j\not\equiv N\,(2)}\frac{\lambda_j\,\langle G_N^{(\gamma)},\varphi_j\rangle_\gamma\langle G_N^{(\gamma)},\varphi_j\rangle}{\lambda_j-\lambda}\Bigg)\\
&\qquad\times
\Bigg(\sum_{j\equiv N\,(2)}\frac{\lambda_j\,\langle G_{N-1}^{(\gamma)},\varphi_j\rangle_\gamma\langle G_{N-1}^{(\gamma)},\varphi_j\rangle}{\lambda_j-\lambda}\Bigg)=0 .
\end{aligned}
\end{equation}
\end{proposition}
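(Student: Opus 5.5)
We apply the $2\times 2$ spectral determinant identity \eqref{eq:spectral determinant 2x2} from Corollary~\ref{cor:spectral determinant}, specialized to $\mathcal S=\mathcal Q=[\gegen{N}{\gamma}\ \gegen{N-1}{\gamma}]$ and $\innprodQ{\cdot}{\cdot}=\innprodgegen{\cdot}{\cdot}$. The first task is to check that the corollary applies. The operator $\mathcal D^2_{\rm Dir}$ is symmetric with respect to $\innLtwo{\cdot}{\cdot}$, as one sees by integrating by parts twice under $u(\pm1)=0$. The hypothesis $w_\gamma\in L^2(-1,1)$ requires $2\gamma-1>-1$, i.e.\ $\gamma>0$. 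For $-\nicefrac12<\gamma\le 0$ I would instead argue directly: the eigenfunction expansion of $(\mathcal D^2_{\rm Dir}-\lambda)^{-1}\mathcal Q$ can be differentiated twice, and the resulting series converges uniformly. This is because the Fourier coefficients $\innLtwo{Q}{\varphi_j}$ of a polynomial decay rapidly in $j$. Uniform convergence lets us pair the series term by term with the $L^1$ weight $w_\gamma$. Once the corollary is available, the identity $\mathcal D^2\varphi_j=\lambda_j\varphi_j$ turns each entry into $\sum_j \lambda_j\innprodgegen{G^{(\gamma)}_{N-r}}{\varphi_j}\innLtwo{G^{(\gamma)}_{N-s}}{\varphi_j}/(\lambda_j-\lambda)$ for $r,s\in\{0,1\}$.

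The central step is a parity argument showing that the matrix is diagonal. Gegenbauer polynomials satisfy $\gegen{n}{\gamma}(-x)=(-1)^n\gegen{n}{\gamma}(x)$, and both weights $w_\gamma$ and $1$ are even. From \eqref{eq:second-order exact}, $\varphi_j$ is even (a cosine) when $j$ is odd and odd (a sine) when $j$ is even, so $\varphi_j$ has parity $j+1$ mod $2$. Consequently $\innprodgegen{\gegen{n}{\gamma}}{\varphi_j}$ and $\innLtwo{\gegen{n}{\gamma}}{\varphi_j}$ both vanish unless $n\equiv j+1\pmod 2$, that is, unless $j\not\equiv n\,(2)$. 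In an off-diagonal entry, the two factors involve $\gegen{N}{\gamma}$ and $\gegen{N-1}{\gamma}$, which have opposite parities. No $j$ can satisfy both nonvanishing conditions, so every term of each off-diagonal sum is zero.

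On the diagonal, the $(1,1)$ entry keeps only the terms with $j\not\equiv N\,(2)$. The $(2,2)$ entry keeps only those with $j\not\equiv N-1\,(2)$, which is the same as $j\equiv N\,(2)$. The determinant is therefore the product of the two diagonal sums, and Theorem~\ref{th:spectral determinat} says this product vanishes, which is exactly \eqref{eq:determinant}. Sign conventions such as $\lambda_j-\lambda$ versus $\lambda-\lambda_j$ are already fixed by \eqref{eq:rewriting determinant}.

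I expect the parity bookkeeping to be routine. The only real obstacle is the functional-analytic justification of the term-by-term expansion for $\gamma\le 0$, where $w_\gamma\notin L^2$. If that case proves troublesome, a fallback is to handle the full range $\gamma>-\nicefrac12$ by working with the explicit closed form of $(\mathcal D^2_{\rm Dir}-\lambda)^{-1}\mathcal Q$ given by Theorem~\ref{th:factorization}(a). That expression is smooth on $[-1,1]$, so its sine/cosine expansion converges in $C^2$ and can be paired against any integrable weight.
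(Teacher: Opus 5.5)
Your main argument is the paper's own proof: you apply Corollary~\ref{cor:spectral determinant} with $\mathcal S=\mathcal Q$ and $w_{\mathcal S}=w_\gamma$, replace $\innprodgegen{\gegen{n}{\gamma}}{\mathcal D^2\varphi_j}$ by $\lambda_j\innprodgegen{\gegen{n}{\gamma}}{\varphi_j}$, and remove the off-diagonal entries by parity. Your parity bookkeeping is correct ($\varphi_j$ has parity $j+1$, so only $j\not\equiv n\,(2)$ survives), and for $\gamma>0$ this is a complete proof. You are also right that the corollary needs $w_\gamma\in L^2$, i.e.\ $\gamma>0$, which the paper's one-line proof does not address.

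However, your argument for $-\nicefrac12<\gamma\le 0$, and your fallback, both rest on a false claim. The coefficients $\innLtwo{Q}{\varphi_j}$ of a polynomial do \emph{not} decay rapidly. Since $\varphi_j=\pm\sin(j\pi(x+1)/2)$ is a sine basis, $\innLtwo{Q}{\varphi_j}$ is of exact order $j^{-1}$ whenever $Q(\pm1)\neq0$. That holds here, because $\gegen{N}{\gamma}(1)\neq 0$ by \eqref{eq:evaluation at 1}. Set $u=(\mathcal D^2_{\rm Dir}-\lambda)^{-1}Q$. Then the twice-differentiated series $\sum_j\lambda_j\innLtwo{Q}{\varphi_j}\varphi_j/(\lambda_j-\lambda)$ is the Dirichlet-eigenfunction expansion of $\mathcal D^2u=\lambda u+Q$, and its coefficients are only $O(j^{-1})$. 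Every partial sum vanishes at $x=\pm1$, whereas $\mathcal D^2u(\pm1)=Q(\pm1)\neq0$, so the series cannot converge uniformly on $[-1,1]$. The same objection defeats the fallback. Smoothness of $u$ on $[-1,1]$ does not give $C^2$ convergence of its sine/cosine expansion, because $u''$ does not satisfy the Dirichlet conditions.

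For the full range $\gamma>-\nicefrac12$, replace uniform convergence by uniform boundedness. The partial sums above are Fourier partial sums of the odd $4$-periodic extension, in the variable $x+1$, of the smooth function $\lambda u+Q$. This extension is piecewise smooth, so the partial sums are uniformly bounded on $[-1,1]$ and converge pointwise to $\lambda u+Q$ on $(-1,1)$. Dominated convergence against $w_\gamma\gegen{N-r}{\gamma}\in L^1(-1,1)$ then justifies pairing term by term. The paper uses exactly this argument for the analogous series in Step~1 of Appendix~\ref{app:proof}. There it first splits $\lambda_j/(\lambda_j-\lambda)=1+\lambda/(\lambda_j-\lambda)$, so the second part has $O(j^{-3})$ coefficients and converges uniformly, and only the expansion of $Q$ itself needs dominated convergence. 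With this repair your proof covers every $\gamma>-\nicefrac12$, including the Chebyshev case $\gamma=0$.
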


\begin{proof}
This result follows directly from Corollary~\ref{cor:spectral determinant} with $\mathcal S=\mathcal Q$ and $w_{\mathcal S}=w_\gamma$, together with $\langle G_n^{(\gamma)},\mathcal D^2\varphi_j\rangle_\gamma=\lambda_j\langle G_n^{(\gamma)},\varphi_j\rangle_\gamma$. In particular, by Gegenbauer parity \eqref{eq:parity}, the off-diagonal entries of the $2\times2$ matrix \eqref{eq:spectral determinant 2x2} vanish, so the determinant is the product of its diagonal entries.
\end{proof}

Throughout this section, we focus on the case $k\not\equiv N\, (2)$. The case $k\equiv N\, (2)$ is identical with $N-1$ in place of $N$.
Then, a $\tau$-eigenvalue close to $\lambda_k$ makes the first factor in \eqref{eq:determinant} vanish (the second factor is regular and non-zero near $\lambda_k$). Isolating the term $j=k$ gives an \emph{exact} formula for the error.

\begin{proposition}[Error identity]\label{prop:error identity}
Under the hypotheses of Proposition~\ref{prop:determinant}, if $\lambda$ is a $\tau$-eigenvalue for which the first factor in \eqref{eq:determinant} vanishes, then
\begin{equation}\label{eq:error identity}
\delta\lambda_k\coloneqq\lambda_k-\lambda
=-\,\lambda_k\,\frac{\langle G_N^{(\gamma)},\varphi_k\rangle_\gamma\,\langle G_N^{(\gamma)},\varphi_k\rangle}{\Sigma_{N,k}^{(\gamma)}(\lambda)},
\end{equation}
where
\begin{equation}\label{eq:reduced sum}
\Sigma_{N,k}^{(\gamma)}(\lambda)\coloneqq\sum_{\substack{j\not\equiv N\,(2)\\ j\neq k}}\frac{\lambda_j\,\langle G_N^{(\gamma)},\varphi_j\rangle_\gamma\langle G_N^{(\gamma)},\varphi_j\rangle}{\lambda_j-\lambda} .
\end{equation}
\end{proposition}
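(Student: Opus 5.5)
The plan is to start from the vanishing of the first factor in \eqref{eq:determinant}, which is a convergent series over $j\not\equiv N\,(2)$, and to split off the single term $j=k$. Here $k\not\equiv N\,(2)$, as fixed above, so that this term actually belongs to the series. For $\lambda\notin\sigma(\mathcal D^2_{\rm Dir})$, Corollary~\ref{cor:spectral determinant} guarantees that the series converges. Indeed, it is obtained by applying the bounded functional $\langle G_N^{(\gamma)},\mathcal D^2\,\cdot\,\rangle_\gamma$ to the $L^2$-convergent expansion \eqref{eq:integral equation solution}. Hence removing one term leaves a convergent remainder, which is exactly $\Sigma_{N,k}^{(\gamma)}(\lambda)$ as defined in \eqref{eq:reduced sum}.

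The vanishing of the first factor therefore reads
\[
\frac{\lambda_k\,\langle G_N^{(\gamma)},\varphi_k\rangle_\gamma\langle G_N^{(\gamma)},\varphi_k\rangle}{\lambda_k-\lambda}+\Sigma_{N,k}^{(\gamma)}(\lambda)=0 .
\]
Since $\lambda\neq\lambda_k$, the first term is well defined. I would then multiply through by $\lambda_k-\lambda=\delta\lambda_k$, which gives
\[
\lambda_k\langle G_N^{(\gamma)},\varphi_k\rangle_\gamma\langle G_N^{(\gamma)},\varphi_k\rangle=-\delta\lambda_k\,\Sigma_{N,k}^{(\gamma)}(\lambda).
\]
Dividing by $\Sigma_{N,k}^{(\gamma)}(\lambda)$ yields \eqref{eq:error identity}.

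The one point that needs justification is that $\Sigma_{N,k}^{(\gamma)}(\lambda)\neq0$, so that this division is legitimate. Suppose instead that $\Sigma_{N,k}^{(\gamma)}(\lambda)=0$. The displayed identity would then force $\lambda_k\langle G_N^{(\gamma)},\varphi_k\rangle_\gamma\langle G_N^{(\gamma)},\varphi_k\rangle=0$. I would argue that the $j=k$ coefficient is nonzero: $\lambda_k\neq0$, and both inner products are nonzero for the relevant parity. The latter is either checked from the explicit Bessel-type formulas for the Gegenbauer moments of $\cos$ and $\sin$ in Appendix~\ref{app:properties Gegenbauer}, or simply made an implicit hypothesis of the identity, in which case \eqref{eq:error identity} is read as holding whenever the right-hand side is defined.

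I expect this nondegeneracy check to be the only real obstacle. The rest is pure algebra on the series already provided by Proposition~\ref{prop:determinant}. If the explicit nonvanishing of the moments turns out to be delicate for some $\gamma$, the fallback is to state the identity under the assumption $\Sigma_{N,k}^{(\gamma)}(\lambda)\neq 0$. This assumption holds in the regime of interest: there $\lambda$ is close to $\lambda_k$, the remaining terms do not resonate, and the sum is dominated by nonsingular contributions.
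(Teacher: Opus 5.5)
Your argument is the paper's own: isolate the $j=k$ term in the vanishing first factor, multiply by $\lambda_k-\lambda\neq0$, and divide by $\Sigma_{N,k}^{(\gamma)}(\lambda)$. You also correctly observe that $\Sigma_{N,k}^{(\gamma)}(\lambda)\neq0$ is then equivalent to nonvanishing of the $j=k$ coefficient.

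That nonvanishing cannot be checked in general from Lemma~\ref{lem:bessel}, which sits in the main text rather than in Appendix~\ref{app:properties Gegenbauer}, so your primary route fails. The lemma guarantees $J_{N+\gamma}(\kappa)>0$ only for $\kappa<N+\gamma$. For $\gamma\notin\{\nicefrac12,\nicefrac32\}$, the unweighted coefficient has no closed form and can vanish at finite $N$; by continuity in $\gamma$ this happens for some $\gamma$ near $\nicefrac12$, where $C_{\gamma,k}$ changes sign. Your fallback hypothesis is therefore the correct reading of the statement. It is exactly what the paper secures in the regime of interest, via \eqref{eq:reduced sum on D}, in Step~5 of Appendix~\ref{app:proof}.
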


The numerator of \eqref{eq:error identity} is the product of the \emph{weighted} and the \emph{unweighted} Gegenbauer coefficients of the eigenfunction $\varphi_k$. The weighted one is given by a Bessel function, as shown by the following lemma.

\begin{lemma}[Weighted coefficients]\label{lem:bessel}
    Let $k\in\mathbb N$, $\gamma>-\nicefrac{1}{2}$, $N\in\mathbb N$ such that $N\not\equiv k (2)$, and $\kappa\coloneqq \nicefrac{k\pi}{2}$, so that $\lambda_k = -\kappa^2$. Then,
    \begin{align}
        \langle G_N^{(\gamma)},\varphi_k\rangle_\gamma
        &=(-1)^{\lfloor \nicefrac N2\rfloor}\,\frac{\pi\,\Gamma(N+2\gamma)\,J_{N+\gamma}(\kappa)}{2^{\gamma}\,N!\,\Gamma(\gamma+1)\,\kappa^{\gamma}},
        \label{eq:weighted coefficient}\\
        \|G_N^{(\gamma)}\|_\gamma^2
        &=\frac{\pi\,2^{-1-2\gamma}\,\Gamma(N+2\gamma)}{N!\,(N+\gamma)\,\Gamma(\gamma+1)^2},
        \label{eq:norm}\\
        \frac{\langle G_N^{(\gamma)},\varphi_k\rangle_\gamma}{\|G_N^{(\gamma)}\|_\gamma^2}
        &=(-1)^{\lfloor \nicefrac N2\rfloor}\,2^{\gamma+1}\Gamma(\gamma+1)\,(N+\gamma)\,\frac{J_{N+\gamma}(\kappa)}{\kappa^{\gamma}},
        \label{eq:ratio}
    \end{align}
    where $J_\nu$ denotes the Bessel function of the first kind.
    Moreover,
    \begin{equation*}
        0<J_{N+\gamma}(\kappa)<\frac{(\kappa/2)^{N+\gamma}}{\Gamma(N+\gamma+1)}
        \qquad \text{for } 0<\kappa<N+\gamma,
    \end{equation*}
    and
    \begin{equation}\label{eq:Bessel asymptotic}
        J_{N+\gamma}(\kappa)=\frac{(\kappa/2)^{N+\gamma}}{\Gamma(N+\gamma+1)}\bigl(1+O\bigl(\kappa^2 N^{-1}\bigr)\bigr)
        \qquad\text{as } N\to\infty.
    \end{equation}
    For the two special values of $\gamma$, the unweighted coefficient is a Bessel function as well:
    \begin{equation}\label{eq:unweighted special}
    \langle G_N^{(\nicefrac{1}{2})},\varphi_k\rangle=(-1)^{\lfloor \nicefrac N2\rfloor}\sqrt{\frac{2\pi}{\kappa}}\,J_{N+1/2}(\kappa),
    \qquad
    \langle G_N^{(\nicefrac{3}{2})},\varphi_k\rangle=(-1)^{\lfloor \nicefrac N2\rfloor}\,\frac13\sqrt{2\pi\kappa}\,J_{N+3/2}(\kappa).
    \end{equation}
\end{lemma}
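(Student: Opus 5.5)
The plan is to identify the paper's normalization with the standard ultraspherical polynomials $C_N^{(\gamma)}$ and then read off \eqref{eq:weighted coefficient} from Gegenbauer's finite Fourier transform. Comparing the norm formula \eqref{eq:norm} with the classical one,
\[
\|C_N^{(\gamma)}\|_\gamma^2=\frac{\pi\,2^{1-2\gamma}\,\Gamma(N+2\gamma)}{N!\,(N+\gamma)\,\Gamma(\gamma)^2},
\]
suggests $G_N^{(\gamma)}=C_N^{(\gamma)}/(2\gamma)$. I will check this against Appendix~\ref{app:properties Gegenbauer}, taking the limit at $\gamma=0$. With this identification, \eqref{eq:norm} follows immediately, since $(2\gamma)^2\Gamma(\gamma)^2=4\Gamma(\gamma+1)^2$.

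For \eqref{eq:weighted coefficient} I will use the classical integral
\[
\int_{-1}^1 e^{i\kappa x}(1-x^2)^{\gamma-1/2}C_N^{(\gamma)}(x)\,dx=\frac{\pi\,2^{1-\gamma}\,\Gamma(N+2\gamma)\,i^N J_{N+\gamma}(\kappa)}{N!\,\Gamma(\gamma)\,\kappa^\gamma}.
\]
Dividing by $2\gamma$ turns the constants into those of \eqref{eq:weighted coefficient}. The sign $(-1)^{\lfloor N/2\rfloor}$ comes from parity, via \eqref{eq:parity}. If $k$ is odd, then $\varphi_k=\cos(\kappa x)$ and $N$ is even, so the coefficient is the real part of the integral and $i^N=(-1)^{N/2}$. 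If $k$ is even, then $\varphi_k=\sin(\kappa x)$ and $N$ is odd, so the coefficient is the imaginary part and $\operatorname{Im}(i^N)=(-1)^{(N-1)/2}$. In both cases the factor is $(-1)^{\lfloor N/2\rfloor}$. The ratio \eqref{eq:ratio} is then algebra.

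For the Bessel bounds I will use Poisson's representation,
\[
J_\nu(\kappa)=\frac{(\kappa/2)^\nu}{\sqrt\pi\,\Gamma(\nu+1/2)}\int_{-1}^1(1-t^2)^{\nu-1/2}\cos(\kappa t)\,dt,\qquad \nu=N+\gamma>-\tfrac12 .
\]
Since $\cos(\kappa t)<1$ for almost every $t$ when $\kappa>0$, this gives the strict upper bound $(\kappa/2)^\nu/\Gamma(\nu+1)$. Positivity on $0<\kappa<\nu$ follows from the classical fact that the first positive zero satisfies $j_{\nu,1}>\nu$. I expect this step to be the main subtlety. I would quote that fact (Watson) rather than attempt an alternating-series argument, because the latter only works when $\kappa^2/4<\nu+1$.

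The asymptotic \eqref{eq:Bessel asymptotic} follows from the power series. For fixed $\kappa$ and large $N$, the ratio of consecutive terms is $\kappa^2/\bigl(4(m+1)(\nu+m+1)\bigr)\le\kappa^2/(4N)$. The series therefore alternates with decreasing terms, and the tail is bounded by the first omitted term, which is $O(\kappa^2 N^{-1})$ relative to the leading term.

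For \eqref{eq:unweighted special} with $\gamma=\nicefrac12$, the weight is $1$, so the coefficient is the $\gamma=\nicefrac12$ case of \eqref{eq:weighted coefficient}. Using $\Gamma(N+1)/N!=1$ and $\Gamma(3/2)=\sqrt\pi/2$, this simplifies to $\sqrt{2\pi/\kappa}\,J_{N+1/2}(\kappa)$. For $\gamma=\nicefrac32$, I will use $C_N^{(3/2)}=P_{N+1}'$, so that $G_N^{(3/2)}=P_{N+1}'/3$. Integrating by parts gives
\[
\langle P'_{N+1},\varphi_k\rangle=-\langle P_{N+1},\varphi_k'\rangle,
\]
where the boundary terms vanish because $\varphi_k(\pm1)=0$. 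Here $\varphi_k'=\mp\kappa\,\sin$ or $\kappa\cos$, a trigonometric function of the opposite parity, which pairs correctly with $P_{N+1}$. Applying the Legendre formula with $N+1$ in place of $N$ then gives $\tfrac13\kappa\sqrt{2\pi/\kappa}\,J_{N+3/2}(\kappa)=\tfrac13\sqrt{2\pi\kappa}\,J_{N+3/2}(\kappa)$. The only remaining point is to track the sign: I will check, separately for $N$ even and $N$ odd, that the minus sign from the integration by parts and the sign of $\varphi_k'$ combine with $(-1)^{\lfloor (N+1)/2\rfloor}$ to give $(-1)^{\lfloor N/2\rfloor}$.
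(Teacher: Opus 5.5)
Your proposal is correct and follows the paper's proof essentially step by step: the Gegenbauer--Bessel integral divided by $2\gamma$ with the parity-based sign, the zero bound $j_{\nu,1}>\nu$ for positivity, the power series for the asymptotics, and integration by parts using $G_N^{(3/2)}=\mathcal D P_{N+1}/3$. The only difference is that you obtain the upper bound from Poisson's integral instead of the infinite-product representation cited in the paper; this is equally valid and has the advantage of holding for every $\kappa>0$.
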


\begin{proof}
The weighted coefficient \eqref{eq:weighted coefficient} is the Gegenbauer--Bessel integral \cite[Eq.~18.17.16\_5]{DLMF}
\begin{equation*}
    \int_{-1}^1(1-x^2)^{\gamma-1/2}C_N^{(\gamma)}(x)e^{i\kappa x}dx=2^{1-\gamma}\pi i^N\Gamma(N+2\gamma)\,\kappa^{-\gamma}\frac{J_{N+\gamma}(\kappa)}{N!\,\Gamma(\gamma)},
\end{equation*}
divided by $2\gamma$ (see Appendix~\ref{app:properties Gegenbauer}), taking its real part for $N$ even and its imaginary part for $N$ odd. The norm \eqref{eq:norm} is given in \cite[Table~18.3.1]{DLMF}, while the ratio \eqref{eq:ratio} is obtained by combining this norm with \eqref{eq:weighted coefficient}.
The lower bound for $J_{N+\gamma}$ follows from the fact that its first positive zero is greater than $N+\gamma$ \cite[Eq.~10.21.3]{DLMF}, while the upper bound follows from the infinite-product representation of $J_{N+\gamma}$ \cite[Eq.~10.21.15]{DLMF}. The asymptotic form for $J_{N+\gamma}$ follows from its series representation \cite[Eq.~10.2.2]{DLMF}.
Finally, the first identity in \eqref{eq:unweighted special} is \eqref{eq:weighted coefficient} with $\gamma=\nicefrac12$ (trivial weight). For the second one, we have $G_N^{(\nicefrac 32)}=\mathcal D P_{N+1}/3$ by \eqref{eq:derivative recurrence formula-bis} and it is sufficient to integrate by parts using $\varphi_k(\pm1)=0$, $\mathcal D\varphi_k=-\kappa\,\sin(\kappa x)$ for $N$ even and $\mathcal D\varphi_k=\kappa\,\cos(\kappa x)$ for $N$ odd, and apply \eqref{eq:weighted coefficient} with $N+1$.
\end{proof}

\begin{remark}[Why $\gamma=\nicefrac12$ and $\gamma=\nicefrac32$]\label{rem:why}
Since $$\langle G_N^{(\gamma)},\varphi_k\rangle=\langle G_N^{(\gamma)},(1-x^2)^{\nicefrac{1}{2}-\gamma}\varphi_k\rangle_\gamma,$$ the unweighted coefficient is the $\gamma$-Gegenbauer coefficient of $(1-x^2)^{\nicefrac{1}{2}-\gamma}\varphi_k$. Because $\varphi_k$ has simple zeros at $\pm1$, this function behaves like $(1\mp x)^{\nicefrac{3}{2}-\gamma}$ at the endpoints. It is therefore entire exactly when $\nicefrac32-\gamma$ is a non-negative integer, i.e., $\gamma\in\{\nicefrac{1}{2}, \nicefrac{3}{2}\}$; in that case its Gegenbauer coefficients decay super-algebraically \cite{Wang_2012, Wang_2016}. Otherwise, it has an algebraic endpoint singularity, whose effect is quantified by Theorem~\ref{th:asymptotic behavior N}.
\end{remark}

\begin{theorem}[Asymptotic behavior of $\langle \gegen{N}{\gamma}, \varphi_k\rangle$]\label{th:asymptotic behavior N}
    Given $k\in\mathbb N$, $\gamma > -\nicefrac{1}{2}$, and $N\in\mathbb N$ satisfying $N\not\equiv k\, (2)$, let $\varphi_k$ be the orthonormal basis function associated with the $k$-th eigenvalue $\lambda_k$ of \eqref{eq:second-order eigenvalue problem}, and let $\gegen{N}{\gamma}$ be the Gegenbauer polynomial of degree $N$ and parameter $\gamma$. Then
    \begin{equation}\label{eq:asymptotic inner product}
        \langle \gegen{N}{\gamma}, \varphi_k\rangle
        = C_{\gamma, k} N^{2\gamma - 5} \bigl(1+O\bigl(N^{-1}\bigr)\bigr),
        \qquad\text{as}\ N\to\infty
        \qquad\qquad
        \gamma \notin \{\nicefrac12, \nicefrac32\},
    \end{equation}
    where
    \begin{equation*}
        C_{\gamma, k} \coloneqq - \frac{2 \mathcal D \varphi_k(1)}{\Gamma(2\gamma+1)} (2\gamma-1) (2\gamma-3).
    \end{equation*}
    In particular, using the explicit exact solutions \eqref{eq:second-order exact},
    \begin{equation}\label{eq:constant with explicit}
        C_{\gamma, k} = (-1)^{\lceil\nicefrac{k}{2}\rceil + 1} \frac{k \pi}{\Gamma(2\gamma+1)} (2\gamma-1) (2\gamma-3).
    \end{equation}
    For the two excluded values $\gamma\in\{\nicefrac12,\nicefrac32\}$ the inner product
    decays faster than every power of $N$.
\end{theorem}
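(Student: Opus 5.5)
The plan is to treat the unweighted coefficient as a Gegenbauer coefficient of a function with algebraic endpoint singularities, as suggested by Remark~\ref{rem:why}, and to extract its large-$N$ behaviour from an exact Beta-type integral for the leading singular term. Write
\[
\langle G_N^{(\gamma)},\varphi_k\rangle=\int_{-1}^1 G_N^{(\gamma)}(x)\,\varphi_k(x)\,dx ,
\qquad
\varphi_k(x)=(1-x^2)\,h_k(x),
\]
where $h_k$ is entire, because $\varphi_k$ is entire with simple zeros at $\pm1$. In particular $h_k(\pm1)=\mp\mathcal D\varphi_k(\pm1)/2$, and by parity $\varphi_k(-x)=\pm\varphi_k(x)$.

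\textbf{Step 1 (localisation at the endpoints).} I will insert a smooth partition of unity $\chi_+ + \chi_0 + \chi_- = 1$, with $\chi_\pm$ supported near $\pm1$ and $\chi_0$ supported in the interior.
\begin{itemize}
\item[(i)] The interior piece $\chi_0\varphi_k$ contributes $O(N^{-M})$ for every $M$. This follows by writing $G_N^{(\gamma)}$ through its Rodrigues formula, or through the differential equation $\mathcal D[(1-x^2)^{\gamma+1/2}\mathcal D G_N^{(\gamma)}]=-N(N+2\gamma)(1-x^2)^{\gamma-1/2}G_N^{(\gamma)}$, and integrating by parts repeatedly against the smooth compactly supported function $(1-x^2)^{1/2-\gamma}\chi_0\varphi_k$. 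The boundary terms vanish.
\item[(ii)] Near $x=1$, I will Taylor expand $h_k(x)(1+x)$ in powers of $(1-x)$. This reduces the endpoint contribution to a sum of model integrals
\[
I_m(N)=\int_{-1}^1 (1-x)^{1+m}(1+x)^{\gamma-1/2}\,(1-x^2)^{1/2-\gamma}\,G_N^{(\gamma)}(x)\,dx ,
\]
up to a smooth remainder handled as in (i). The analogous reduction applies at $x=-1$.
\end{itemize}

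\textbf{Step 2 (exact model integrals).} Each $I_m(N)$ is an integral of a Jacobi-type polynomial against a pure power of $(1-x)$. It can be evaluated in closed form by the Rodrigues formula and $N$ integrations by parts, or equivalently from the hypergeometric representation $C_N^{(\gamma)}\propto {}_2F_1(-N,N+2\gamma;\gamma+\tfrac12;\tfrac{1-x}{2})$ integrated termwise, followed by a Saalsch\"utz/Gauss summation. The result is a ratio of Gamma functions of the form
\[
\frac{\Gamma(N+2\gamma)}{N!}\cdot\frac{\Gamma(N+\gamma-m-\tfrac32)\cdots}{\Gamma(N+\gamma+m+\tfrac52)\cdots}\cdot\frac{1}{\Gamma(\gamma-\tfrac12-m)\,\Gamma(\gamma-\tfrac32-m)}\cdot(\text{explicit constants}).
\]
The reciprocal Gamma factors at $m=0$ produce exactly $(2\gamma-1)(2\gamma-3)$ after using $1/\Gamma(z)=z/\Gamma(z+1)$. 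They vanish identically for $\gamma\in\{\nicefrac12,\nicefrac32\}$, which is the mechanism behind the exceptional cases.

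Stirling's formula then gives $I_m(N)\asymp N^{2\gamma-5-2m}$. Hence the $m=0$ term dominates, and the $m\ge1$ terms only affect the $O(N^{-1})$ relative error (in fact $O(N^{-2})$).

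\textbf{Step 3 (combining the two endpoints).} The $x=-1$ contribution equals the $x=1$ one, because of $G_N^{(\gamma)}(-x)=(-1)^NG_N^{(\gamma)}(x)$ and the opposite parity of $\varphi_k$ (since $N\not\equiv k\,(2)$). This accounts for the factor $2$ in $C_{\gamma,k}$. Inserting $h_k(1)=-\mathcal D\varphi_k(1)/2$ together with the nonstandard normalisation $G_N^{(\gamma)}=C_N^{(\gamma)}/(2\gamma)$ should yield $C_{\gamma,k}$. Formula \eqref{eq:constant with explicit} then follows by evaluating $\mathcal D\varphi_k(1)=\mp\kappa\sin\kappa$ or $\kappa\cos\kappa$ from \eqref{eq:second-order exact}.

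For $\gamma\in\{\nicefrac12,\nicefrac32\}$, the super-algebraic decay follows directly from \eqref{eq:unweighted special} and the Bessel bound in Lemma~\ref{lem:bessel}. Since $J_{N+\gamma}(\kappa)\le(\kappa/2)^{N+\gamma}/\Gamma(N+\gamma+1)$, the coefficient decays factorially.

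\textbf{Main obstacle.} The delicate step is Step 2 together with the uniformity in Step 1. I must get the closed-form Gamma ratio with all constants right, including signs and normalisation, and make sure the truncated Taylor remainder near the endpoints contributes only $O(N^{2\gamma-6})$. The first thing I would try is the hypergeometric termwise integration, since the sum is a terminating ${}_3F_2$ at unit argument that is Saalsch\"utzian and sums in closed form. For the remainder, I would keep enough Taylor terms that the remainder vanishes to high order at $x=1$, and then bound its coefficient by the same integration by parts as in Step 1(i).
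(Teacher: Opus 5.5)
Your route is genuinely different from the paper's. Once one concrete error in Steps 1(ii)--2 is fixed, it works.

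\textbf{Comparison with the paper.} The paper never localizes. It integrates by parts twice using \eqref{eq:derivative recurrence formula}. The first boundary term vanishes because $\varphi_k(\pm1)=0$. The second, $b_2^{(\gamma)}(N)=-2\mathfrak T_{2,N,\gamma}(1)\,\mathcal D\varphi_k(1)$, is computed exactly from \eqref{eq:evaluation at 1} and already equals $C_{\gamma,k}N^{2\gamma-5}(1+O(N^{-1}))$. The factor $(2\gamma-1)(2\gamma-3)$ comes from the three-term combination $\mathfrak T_{2,N,\gamma}(1)$. Via $\mathcal D^2\varphi_k=\lambda_k\varphi_k$, the leftover integral becomes $\lambda_k$ times a combination of $I_{N+2},I_N,I_{N-2}$ with $O(N^{-2})$ coefficients. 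A bootstrap from the a priori bound $|I_N|\lesssim N^q$ then makes it $O(N^{2\gamma-7})$.

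That argument is short and needs no closed-form moments, partitions of unity or remainder estimates, but it relies on the eigen-equation. Your Darboux-type endpoint analysis uses only that $\varphi_k$ is smooth with simple zeros at $\pm1$. It yields a full expansion in powers $N^{2\gamma-5-2m}$, and it explains the exceptional values structurally: every model integral carries $1/\Gamma(\gamma-\frac32-m)$, which vanishes for all $m$ when $\gamma\in\{\frac12,\frac32\}$. For those two values, your use of \eqref{eq:unweighted special} with the Bessel bound of Lemma~\ref{lem:bessel} is more self-contained than the paper's appeal to Remark~\ref{rem:why}, and it gives factorial decay.

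\textbf{The error.} As written, $(1+x)^{\gamma-1/2}(1-x^2)^{1/2-\gamma}=(1-x)^{1/2-\gamma}$, so your $I_m(N)=\int_{-1}^1(1-x)^{m+3/2-\gamma}\gegen{N}{\gamma}\,dx$. This is not the right model, for two reasons. Its $x=1$ contribution has the wrong exponent, $N^{4\gamma-6-2m}$. And because the integrand does not vanish at $x=-1$, for $\gamma\neq\frac12$ it also picks up a contribution of order $N^{2\gamma-3}$ there. Drop the factor $(1-x^2)^{1/2-\gamma}$ and use
\[
M_m(N)=\int_{-1}^1(1-x)^{1+m}(1+x)^{\gamma-1/2}\gegen{N}{\gamma}(x)\,dx=\innprodgegen{(1-x)^{m+3/2-\gamma}}{\gegen{N}{\gamma}},
\]
which is harmless at $x=-1$.

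Correspondingly, you must Taylor expand $g(x)=\varphi_k(x)/\bigl[(1-x)(1+x)^{\gamma-1/2}\bigr]=(1+x)^{3/2-\gamma}h_k(x)$, not $(1+x)h_k(x)$. The leading coefficient is then $g(1)=-2^{1/2-\gamma}\mathcal D\varphi_k(1)$. The Rodrigues formula and $N$ integrations by parts reduce $M_m$ to a Beta integral, so no Saalsch\"utz summation is needed:
\[
M_m(N)=\frac{2^{m+\gamma+3/2}(m+1)!\,\Gamma(\gamma+\frac12)\,\Gamma(N+2\gamma)\,\Gamma(N+\gamma-\frac32-m)}{\Gamma(2\gamma+1)\,\Gamma(\gamma-\frac32-m)\,N!\,\Gamma(N+\gamma+\frac52+m)}.
\]
There is a single reciprocal Gamma in $\gamma$, not two as in your schematic; two would give a double zero at $\gamma=\frac12$. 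For $m=0$, $\Gamma(\gamma+\frac12)/\Gamma(\gamma-\frac32)=(\gamma-\frac12)(\gamma-\frac32)$, and $2g(1)M_0(N)=C_{\gamma,k}N^{2\gamma-5}(1+O(N^{-1}))$. So the constant does come out right.

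\textbf{Remainder terms.} The globalization error $\innprodgegen{(1-\chi_+)(1-x)^{m+3/2-\gamma}}{\gegen{N}{\gamma}}$ and the Taylor remainder are handled by the Sturm--Liouville integrations by parts of Step~1(i), with two adjustments. First, the argument must be applied to functions smooth up to $x=-1$; this is fine because the boundary terms carry $(1-x^2)^{\gamma+1/2}$. Second, for the remainder the integration by parts can be repeated only finitely many times. Each application of the operator lowers the order of vanishing at $x=1$ by one, not two, so you need correspondingly many Taylor terms.
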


\begin{proof}
    Assume $\gamma\notin\{\nicefrac{1}{2}, \nicefrac{3}{2}\}$.
    Combining the properties of Gegenbauer polynomials \eqref{eq:parity} and \eqref{eq:derivative recurrence formula}, integration by parts, and the fact that
    $\varphi_k(\pm 1) = 0$ from \eqref{eq:second-order eigenvalue problem},
    we get
    \begin{equation*}
        \begin{split}
            \innLtwo{\gegen{N}{\gamma}}{\varphi_k} =&\ \innLtwolr{\frac{\mathcal D (\gegen{N+1}{\gamma} - \gegen{N-1}{\gamma})}{2(N+\gamma)}}{\varphi_k}
            = \underbrace{\left[ \frac{\gegen{N+1}{\gamma} - \gegen{N-1}{\gamma}}{2(N+\gamma)} \varphi_k \right]_{-1}^1}_{= 0} - \innLtwolr{\frac{\gegen{N+1}{\gamma} - \gegen{N-1}{\gamma}}{2(N+\gamma)}}{\mathcal D \varphi_k} \\
            =&\ - \innLtwolr{\frac{(N+\gamma-1) \mathcal D \gegen{N+2}{\gamma} - 2(N+\gamma) \mathcal D \gegen{N}{\gamma} + (N+\gamma+1) \mathcal D \gegen{N-2}{\gamma}}{4(N+\gamma-1) (N+\gamma) (N+\gamma+1)}}{\mathcal D \varphi_k}
        \end{split}
    \end{equation*}
    By defining
    \begin{equation}\label{eq:definition T_2,N,gamma}
        \mathfrak T_{2, N, \gamma} \coloneqq \frac{(N+\gamma-1) \gegen{N+2}{\gamma} - 2(N+\gamma) \gegen{N}{\gamma} + (N+\gamma+1) \gegen{N-2}{\gamma}}{4(N+\gamma-1) (N+\gamma) (N+\gamma+1)},
    \end{equation}
    and reapplying integration by parts and recalling that $(\lambda_k, \varphi_k)$ is an eigenpair of \eqref{eq:second-order eigenvalue problem}, we have
    \begin{equation}\label{eq:after two integrations}
        \begin{split}
            \innLtwo{\gegen{N}{\gamma}}{\varphi_k} = - \left[ \mathfrak T_{2, N, \gamma} \mathcal D \varphi_k \right]_{-1}^{1} + \innLtwo{\mathfrak T_{2, N, \gamma}}{\mathcal D^2 \varphi_k}
            \eqqcolon b_2^{(\gamma)}(N) + \lambda_k \innLtwo{\mathfrak T_{2, N, \gamma}}{\varphi_k}.
        \end{split}
    \end{equation}

    We focus on the first term in the last expression, using the parity of $\gegen{N}{\gamma}$ \eqref{eq:parity} and $\varphi_k$, and \eqref{eq:evaluation at 1} for the evaluation of the Gegenbauer polynomials at $x=1$:
    \begin{equation}\label{eq:b2 asymptotics}
        \begin{split}
            b_2^{(\gamma)}(N) =&\ - 2\mathfrak T_{2, N, \gamma}(1) \mathcal D\varphi_k(1) 
            = - 2 (2\gamma-1) (2\gamma-3) \, \frac{\Gamma(2\gamma+N-2)}{(N+2)! \Gamma(2\gamma+1)} \, \mathcal D\varphi_k(1)  \\
            =&\ C_{\gamma, k} N^{2\gamma - 5} \bigl(1+O\bigl(N^{-1}\bigr)\bigr),
        \end{split}
    \end{equation}
    where the last equality follows by using Stirling's approximation and the fact that $M! = \Gamma(M+1)$ for $M\in\mathbb{N}$ to show that $\Gamma(N+a)/\Gamma(N+b) = N^{a-b} (1+O(N^{-1}))$, $a,b\in\mathbb{R}$.

    For the sake of simplicity, we write $A\lesssim B$ when $A\leq C B$ for some $C>0$, and we use the notation $I_N\coloneqq \innLtwo{\gegen{N}{\gamma}}{\varphi_k}$.
    Using the Cauchy-Schwarz inequality, the fact that $\Vert\varphi_k\rVert = 1$, and the results in~\cite[Section 7.33]{BOOK_Szego_1975}, we get
    \begin{equation}\label{eq:a priori bound}
        \begin{split}
            \lvert I_N\rvert = \lvert\innLtwo{\gegen{N}{\gamma}}{\varphi_k}\rvert
            \leq \lVert \gegen{N}{\gamma} \rVert \, \lVert \varphi_k \rVert
            \leq \sqrt{2} \lVert \gegen{N}{\gamma} \rVert_{L^{\infty}} \lesssim N^q
        \end{split}
    \end{equation}
    for some finite $q = q(\gamma)$.
    Expanding $\mathfrak T_{2, N, \gamma}$ \eqref{eq:definition T_2,N,gamma} in \eqref{eq:after two integrations}, we get
    \begin{equation}\label{eq:linear combination}
        I_N = b_2^{(\gamma)}(N) + \lambda_k\left(\alpha_{N, 2} I_{N+2} + \alpha_{N, 0} I_N + \alpha_{N, -2} I_{N-2}\right)
    \end{equation}
    where $\alpha_{N,2}, \alpha_{N,0}, \alpha_{N,-2}$ are coefficients of order $N^{-2}$. As a consequence, if $\lvert I_N\rvert\lesssim N^{a}$ for $N\geq N_0$, then since $(N\pm2)^{a}\leq 2^{\lvert a\rvert} N^a$ for $N\geq 4$,
    \begin{equation*}
        \lvert I_N\rvert \lesssim \lvert b_2^{(\gamma)}(N)\rvert + N^{a-2}
        \qquad
        \text{for } N\geq \max\{N_0+2, 4\}.
    \end{equation*}
    By combining this bound with \eqref{eq:linear combination} iteratively, after $\lceil (q-2\gamma+5)/2\rceil$ steps we get $\lvert I_N\rvert \lesssim N^{2\gamma-5}$, and using this bound once more in combination with \eqref{eq:linear combination} we achieve
    \begin{equation*}
        \innLtwo{\gegen{N}{\gamma}}{\varphi_k} = b_2^{(\gamma)}(N) + O(N^{2\gamma-7}) = C_{\gamma, k} N^{2\gamma - 5} \bigl(1+O\bigl(N^{-1}\bigr)\bigr).
    \end{equation*}
    Finally, \eqref{eq:constant with explicit} follows from the fact that $\mathcal D \varphi_k (1) = (-1)^{\lceil\nicefrac{k}{2}\rceil} \frac{k\pi}{2}$.

    For the case $\gamma\in\{\nicefrac{1}{2}, \nicefrac{3}{2}\}$ see Remark~\ref{rem:why}.
\end{proof}

We remark that the leading asymptotic behavior follows from the properties of the second-order eigenvalue equation \eqref{eq:second-order eigenvalue problem}; the explicit eigenvalues and eigenfunctions $(\lambda_k, \varphi_k)$ \eqref{eq:second-order exact} are needed only to evaluate the constant $C_{\gamma, k}$ \eqref{eq:constant with explicit} which is independent of $N$.

Now, we give an explicit estimate of the eigenvalue error and its rate as $N\to \infty$.

\begin{theorem}[Eigenvalue error]\label{th:main}
    Let $k\in\mathbb N$, $\gamma>-\nicefrac{1}{2}$, $\kappa\coloneqq \nicefrac{k\pi}{2}$, so that $\lambda_k = -\kappa^2$, and
    \begin{equation*}
        D\coloneqq\{\lambda\in\mathbb C:\ \lvert\lambda-\lambda_k\rvert\leq r_k\},
        \qquad
        r_k\coloneqq\frac12\min\{\lvert\lambda_j-\lambda_k\rvert:\ j\neq k,\ j\equiv k\, (2)\}.
    \end{equation*}
    Then, there exists $N_0 = N_0(k, \gamma)$ such that, for every $N\geq N_0$ such that $N\not\equiv k\, (2)$
    \begin{enumerate}
        \item [(a)] the $\tau$-perturbed problem \eqref{eq:second-order tau} has exactly one eigenvalue $\lambda\in D$ with eigenfunction of the same parity as $\varphi_k$; this $\tau$-eigenvalue is real and different from $\lambda_k$;
        \item [(b)] at this eigenvalue, for the reduced sum \eqref{eq:reduced sum}, we have
        \begin{equation}\label{eq:reduce sum asymptotic}
            \Sigma_{N,k}^{(\gamma)}(\lambda)=\|G_N^{(\gamma)}\|_\gamma^2\Big(1+\frac{\kappa^2}{2(N+\gamma)^2-2}+O\big(\kappa^4N^{-4}\big)\Big);
        \end{equation}
        \item [(c)] for the error $\delta\lambda_k = \lambda_k-\lambda$, we have
        \begin{equation}\label{eq:main}
            \delta\lambda_k=\kappa^2\,\frac{\langle G_N^{(\gamma)},\varphi_k\rangle_\gamma\,\langle G_N^{(\gamma)},\varphi_k\rangle}{\|G_N^{(\gamma)}\|_\gamma^2}
            \,\left(1-\frac{\kappa^2}{2(N+\gamma)^2-2}+O\big(\kappa^4N^{-4}\big)\right).
        \end{equation}
        Equivalently,
        \begin{equation}\label{eq:main explicit}
            \delta\lambda_k=(-1)^{\lfloor N/2\rfloor}\,2^{\gamma+1}\Gamma(\gamma+1)\,(N+\gamma)\,\kappa^{2-\gamma}\,J_{N+\gamma}(\kappa)\,\langle G_N^{(\gamma)},\varphi_k\rangle\,\big(1+O(\kappa^2N^{-2})\big).
        \end{equation}
    \end{enumerate}
\end{theorem}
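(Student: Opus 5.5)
The plan is to work with the first factor of \eqref{eq:determinant}. Write it as
\[
f(\lambda)\coloneqq\sum_{j\not\equiv N\,(2)}\frac{\lambda_j\,c_j\,d_j}{\lambda_j-\lambda},
\qquad
c_j=\langle G_N^{(\gamma)},\varphi_j\rangle_\gamma,\quad
d_j=\langle G_N^{(\gamma)},\varphi_j\rangle .
\]
By Corollary~\ref{cor:characterization eigenvalues} and Theorem~\ref{th:factorization}, within the parity class of $\varphi_k$ the $\tau$-eigenvalues away from $\sigma(\mathcal D^2_{\rm Dir})$ and from $\mathcal L_0=0$ are the zeros of the corresponding finite tau determinant. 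This determinant is a polynomial in $1/\lambda$.

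\textbf{Step 1 (key identity).} The starting point is
\[
\sum_j \lambda_j c_j d_j/(\lambda_j-\lambda)=\langle G_N^{(\gamma)},\mathcal D^2(\mathcal D^2_{\rm Dir}-\lambda)^{-1}G_N^{(\gamma)}\rangle_\gamma .
\]
Using Theorem~\ref{th:factorization}(a), $(\mathcal D^2_{\rm Dir}-\lambda)^{-1}G_N^{(\gamma)}=\mathcal G_\lambda^N G_N^{(\gamma)}-\varphi_\lambda\,\beta(\lambda)$, where $\varphi_\lambda$ is the fundamental solution of the right parity (cosh or sinh of $\sqrt\lambda x$). Orthogonality kills the polynomial part, since $\mathcal D^2\mathcal G^N_\lambda G_N^{(\gamma)}\in\pol{N-2}$. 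Hence
\[
f(\lambda)=-\lambda\,\langle G_N^{(\gamma)},\varphi_\lambda\rangle_\gamma\,\frac{\mathcal G_\lambda^NG_N^{(\gamma)}(1)}{\varphi_\lambda(1)} .
\]
This is $A\cdot F/E$ in the scalar case. The factor $\langle G_N^{(\gamma)},\varphi_\lambda\rangle_\gamma$ is a Bessel function of imaginary-type argument, by the analytic continuation of \eqref{eq:weighted coefficient}, and it is nonzero near $\lambda_k$ for large $N$ by the positivity bounds in Lemma~\ref{lem:bessel}. So the $\tau$-eigenvalues in $D$ are exactly the zeros of $F(\lambda)=\sum_n \mathcal D^{2n}G_N^{(\gamma)}(1)/\lambda^{n+1}$ in $D$.

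\textbf{Step 2 (part (a)).} Existence and uniqueness will come from a Rouché or argument-principle comparison on $\partial D$ between $f$ and its singular part. Isolate the $j=k$ term:
\[
f(\lambda)=\frac{\lambda_kc_kd_k}{\lambda_k-\lambda}+\Sigma^{(\gamma)}_{N,k}(\lambda).
\]
On $D$, $\Sigma$ is analytic, since its other poles are at distance at least $2r_k$. I will show $\Sigma=\|G_N^{(\gamma)}\|^2_\gamma(1+O(N^{-2}))$ uniformly on $D$ (this is Step 3). The pole term has size $\lesssim |c_kd_k|/r_k$. By \eqref{eq:ratio} together with Theorem~\ref{th:asymptotic behavior N}, or super-algebraic decay in the two special cases, this term is $\|G_N\|_\gamma^2\cdot O\bigl((\kappa/2)^N/N!\bigr)$, which is tiny.

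Then $(\lambda_k-\lambda)f(\lambda)$ has exactly one zero in $D$, by Rouché against $(\lambda_k-\lambda)\Sigma$. That zero is real because $f$ is real on the real axis and the zero is unique, so it is fixed by conjugation. It differs from $\lambda_k$ since $c_kd_k\neq0$: $c_k\neq0$ by the Bessel positivity, and $d_k\neq0$ for large $N$ by Theorem~\ref{th:asymptotic behavior N} or \eqref{eq:unweighted special}. The value $\lambda=0$ is not in $D$.

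\textbf{Step 3 (part (b), the main obstacle).} The goal is the expansion of $\Sigma$ at the eigenvalue. Adding back the $j=k$ term costs only an exponentially small error, so it suffices to expand the full sum
\[
\sum_j \frac{\lambda_jc_jd_j}{\lambda_j-\lambda}=\sum_j c_jd_j+\lambda\sum_j\frac{c_jd_j}{\lambda_j-\lambda}.
\]
By Parseval, $\sum_j c_jd_j=\langle G_N^{(\gamma)},G_N^{(\gamma)}\rangle_\gamma$, since $c_j$ are the $L^2$ coefficients of $w_\gamma G_N^{(\gamma)}$. Iterating $\frac{1}{\lambda_j-\lambda}=\frac1{\lambda_j}+\frac{\lambda}{\lambda_j(\lambda_j-\lambda)}$ gives moments
\[
\sum_j c_jd_j\lambda_j^{-m}=\langle G_N^{(\gamma)},(\mathcal D^2_{\rm Dir})^{-m}G_N^{(\gamma)}\rangle_\gamma .
\]
For $m=1$, $(\mathcal D^2_{\rm Dir})^{-1}G_N^{(\gamma)}$ is the double antiderivative with boundary correction. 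Using \eqref{eq:derivative recurrence formula} twice, as in \eqref{eq:definition T_2,N,gamma}, its $\gamma$-projection onto $G_N^{(\gamma)}$ is $-\|G_N^{(\gamma)}\|^2_\gamma/(2(N+\gamma)^2-2)$, which comes from the $-2(N+\gamma)$ coefficient in $\mathfrak T_{2,N,\gamma}$. Multiplying by $\lambda\approx-\kappa^2$ yields the $\kappa^2/(2(N+\gamma)^2-2)$ term. The $m\ge2$ moments are $O(N^{-4})\|G_N\|_\gamma^2$ by the same recurrence, and the tail is controlled uniformly on $D$ by a geometric bound.

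The delicate points are two. First, the boundary-correction (linear) part of $(\mathcal D^2_{\rm Dir})^{-1}G_N$ is orthogonal to $G_N$ for $N\ge2$, which holds. Second, the uniform control needs an honest bound; I would instead use the closed form from Step 1 and the Neumann series for $\mathcal G_\lambda^N$ if the moment route gets messy.

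\textbf{Step 4 (part (c)).} Insert (b) into Proposition~\ref{prop:error identity}, expand $1/(1+x)$, and substitute \eqref{eq:ratio} to get \eqref{eq:main explicit}.
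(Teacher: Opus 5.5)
Your overall structure agrees with the paper's Steps 4--5. You identify the $\tau$-eigenvalues in $D$ with zeros of the first factor via the factorization, apply Rouch\'e to $(\lambda_k-\lambda)f$ against $(\lambda_k-\lambda)\Sigma_{N,k}^{(\gamma)}$, get reality from conjugation symmetry, and finish with the error identity. Your first moment $\langle G_N^{(\gamma)},(\mathcal D^2_{\rm Dir})^{-1}G_N^{(\gamma)}\rangle_\gamma=-\|G_N^{(\gamma)}\|_\gamma^2/(2(N+\gamma)^2-2)$, obtained from $\mathfrak T_{2,N,\gamma}$ and orthogonality of the linear correction, is also correct; it is the paper's leading term \eqref{eq:double integral paired}.

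The gap is Step 3, the expansion of $\Sigma_{N,k}^{(\gamma)}$ uniformly on all of $D$. Part (a) needs this as much as (b): Rouch\'e gives exactly one zero only if $\Sigma_{N,k}^{(\gamma)}$ has no zero inside $D$. Two of your claims there are false.
\begin{itemize}
\item Adding back the $j=k$ term is not an exponentially small change inside $D$. At the $\tau$-eigenvalue the full sum vanishes, so that term equals $-\Sigma_{N,k}^{(\gamma)}(\lambda)\approx-\|G_N^{(\gamma)}\|_\gamma^2$. It is small only on $\partial D$, where $|\lambda-\lambda_k|=r_k$.
\item The moment series $\|G_N^{(\gamma)}\|_\gamma^2+\sum_{m\ge1}\lambda^m M_m$, with $M_m=\sum_j c_jd_j\lambda_j^{-m}$, is the Taylor series at $\lambda=0$ of a meromorphic function. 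That function has a pole at every $\lambda_j$, $j\equiv k\,(2)$, with $c_jd_j\neq0$, so the radius of convergence is at most $|\lambda_k|$ (for large $N$ it is $|\lambda_1|$ or $|\lambda_2|$). The series therefore diverges at points of $D$, and for $k\ge3$ on all of $D$. No geometric tail bound can hold: for large $m$ the moments are dominated by the super-exponentially small residues of the low modes, whose contributions $c_jd_j(\lambda/\lambda_j)^m$ grow geometrically once $|\lambda|>|\lambda_j|$.
\end{itemize}

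The missing idea is to split off the pole-carrying part before expanding. The paper inserts the Dirichlet Green's function and writes $\Sigma_{N,k}^{(\gamma)}=\|G_N^{(\gamma)}\|_\gamma^2-\frac{\mu}{2}I(\mu)+R(\lambda)$.
\begin{itemize}
\item The term $\frac{\mu}{2}I(\mu)$ comes from the free-space kernel $\sin(\mu|x-y|)/(2\mu)$ and is entire in $\lambda$. Its Taylor series converges everywhere. Its low-order coefficients coincide with your $M_m$ and are bounded by $(2/N)^l\|G_N^{(\gamma)}\|_\gamma^2$ for $l\le N/2$, with a super-algebraically small tail.
\item The remainder $R(\lambda)$, the boundary term minus the $j=k$ pole term, is analytic on $D$. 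It is super-exponentially small on $\partial D$ by the Bessel bounds, hence on all of $D$ by the maximum modulus principle.
\end{itemize}
A finite truncation of your moment expansion of the reduced sum, with the remainder bounded mode by mode through the explicit Bessel formula for $c_j$, is a conceivable alternative. However, none of those estimates is in the proposal.

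Smaller points:
\begin{itemize}
\item For $k=1,2$ one has $0\in D$, contrary to your claim, and $\langle G_N^{(\gamma)},\varphi_\mu\rangle_\gamma$ vanishes at $\mu=0$. You therefore need, as the paper does, that $\lambda=0$ is not a $\tau$-eigenvalue and that $f(0)\neq0$.
\item Parseval for $\sum_j c_jd_j$ requires $w_\gamma G_N^{(\gamma)}\in L^2$, i.e.\ $\gamma>0$. For $-\nicefrac12<\gamma\le0$ you need the paper's dominated-convergence argument.
\item Nonvanishing of $\langle G_N^{(\gamma)},\varphi_\mu\rangle_\gamma$ for complex $\mu$ follows from the series asymptotics of $J_{N+\gamma}$, uniform for bounded $\mu$, not from the real positivity bound.
\end{itemize}
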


The proof is given in Appendix~\ref{app:proof}.
The main result of this theorem is \eqref{eq:main}, which expresses the error $\delta\lambda_k$, to leading order, as $\kappa^2=-\lambda_k$ times the product of two factors: (i) the $N$-th coefficient $\innprodgegen{\gegen{N}{\gamma}}{\varphi_k}/\lVert\gegen{N}{\gamma}\rVert^2_{\gamma}$ of $\varphi_k$ in its Gegenbauer expansion, and (ii) the unweighted Gegenbauer coefficient $\innLtwo{\gegen{N}{\gamma}}{\varphi_k}$.
The first factor is super-exponentially small for every $\gamma>-\nicefrac12$, see~\eqref{eq:ratio}. The second factor behaves differently if $\gamma\notin\{\nicefrac{1}{2}, \nicefrac{3}{2}\}$ or $\gamma\in\{\nicefrac{1}{2}, \nicefrac{3}{2}\}$. Namely, by Theorem~\ref{th:asymptotic behavior N}, $\delta\lambda_k$ is of order $N^{2\gamma-4} J_{N+\gamma}(\kappa)$ in the first case, and smaller in the second one.
This is stated by the following corollary, in which the explicit rates are shown.

\begin{corollary}[Rates]\label{cor:rates}
    Under the assumptions of Theorem~\ref{th:main}, as $N\to\infty$:
    \begin{enumerate}
        \item[(a)] for $\gamma\notin\{\nicefrac12,\nicefrac32\}$, with $C_{\gamma,k}$ from \eqref{eq:constant with explicit},
        \begin{equation}\label{eq:rate generic}
        \begin{split}
            \delta\lambda_k
            =&\ (-1)^{\lfloor \nicefrac{N}{2}\rfloor}\,2^{\gamma+1}\Gamma(\gamma+1)\,C_{\gamma,k}\;\kappa^{2-\gamma}\,N^{2\gamma-4}\,J_{N+\gamma}(\kappa)\,\big(1+O(N^{-1})\big), \\
            =&\ (-1)^{\lfloor \nicefrac{N}{2}\rfloor + \lceil\nicefrac{k}{2}\rceil+1} \frac{4 \Gamma(\gamma+1) (2\gamma-1)(2\gamma-3) \kappa^3}{\Gamma(2\gamma+1)}\, \frac{N^{2\gamma-4} (\kappa/2)^{N}}{\Gamma(N+\gamma+1)} \big(1+O(N^{-1})\big);
        \end{split}
        \end{equation}
        \item[(b)] for $\gamma\in\{\nicefrac12,\nicefrac32\}$,
        \begin{equation}\label{eq:rate special}
        \begin{split}
            \delta\lambda_k
            =&\ 2\pi\kappa\,(N+\gamma)\,(J_{N+\gamma}(\kappa))^2\,\big(1+O(\kappa^2N^{-2})\big)\\
            =&\ 2\pi\kappa\, \frac{(N+\gamma)\,(\kappa/2)^{2N+2\gamma}}{\Gamma(N+\gamma+1)^2}\big(1+O(\kappa^2N^{-1})\big).
        \end{split}
        \end{equation}
    \end{enumerate}
    Consequently,
    \begin{equation*}
        \begin{aligned}
            &\log_{10}|\delta\lambda_k|=-N\log_{10}\left(\frac{2N}{e\kappa}\right)+O(\log N)
            &\qquad\text{in case (a)}, \\
            &\log_{10}|\delta\lambda_k|=-2N\log_{10}\left(\frac{2N}{e\kappa}\right)+O(\log N)
            &\qquad\text{in case (b)},
        \end{aligned}
    \end{equation*}
    that is, the two special values of $\gamma$ asymptotically double the number of correct digits. In case (a) the sign of $\delta\lambda_k$ is that of $(-1)^{\lfloor \nicefrac{N}{2}\rfloor+\lceil \nicefrac{k}{2}\rceil+1}(2\gamma-1)(2\gamma-3)$. Figure~\ref{fig:low eigenvalues} shows these phenomena numerically.
\end{corollary}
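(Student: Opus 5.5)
The plan is to start from the explicit form \eqref{eq:main explicit} of Theorem~\ref{th:main}(c), namely
\[
\delta\lambda_k=(-1)^{\lfloor N/2\rfloor}\,2^{\gamma+1}\Gamma(\gamma+1)\,(N+\gamma)\,\kappa^{2-\gamma}\,J_{N+\gamma}(\kappa)\,\langle G_N^{(\gamma)},\varphi_k\rangle\,\big(1+O(\kappa^2N^{-2})\big),
\]
and substitute for the unweighted coefficient $\langle G_N^{(\gamma)},\varphi_k\rangle$ in each of the two regimes.

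\emph{Case (a).} We insert Theorem~\ref{th:asymptotic behavior N}: $\langle G_N^{(\gamma)},\varphi_k\rangle=C_{\gamma,k}N^{2\gamma-5}(1+O(N^{-1}))$. Since $(N+\gamma)N^{2\gamma-5}=N^{2\gamma-4}(1+O(N^{-1}))$, and for fixed $k$ we have $O(\kappa^2N^{-2})\subset O(N^{-1})$, the first line of \eqref{eq:rate generic} follows. For the second line we replace $J_{N+\gamma}(\kappa)$ by its leading term from \eqref{eq:Bessel asymptotic}, giving $(\kappa/2)^{N+\gamma}/\Gamma(N+\gamma+1)$ with relative error $O(N^{-1})$, and substitute $C_{\gamma,k}$ from \eqref{eq:constant with explicit}. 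Writing $k\pi=2\kappa$, the powers of $\kappa$ and $2$ should collapse: $2^{\gamma+1}\cdot 2\kappa\cdot\kappa^{2-\gamma}\cdot \kappa^{\gamma}2^{-\gamma}=4\kappa^3$, leaving $(\kappa/2)^N$. The signs combine to $(-1)^{\lfloor N/2\rfloor+\lceil k/2\rceil+1}$. This bookkeeping is the only real check in this case. The sign statement is then immediate, because $J_{N+\gamma}(\kappa)>0$ for $N$ large by Lemma~\ref{lem:bessel}, and all the Gamma factors are positive.

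\emph{Case (b).} We use \eqref{eq:unweighted special}. For $\gamma=\nicefrac12$, the prefactor is $2^{3/2}\Gamma(3/2)\kappa^{3/2}\sqrt{2\pi/\kappa}=2\pi\kappa$, and the sign squares away because $(-1)^{2\lfloor N/2\rfloor}=1$. For $\gamma=\nicefrac32$, the prefactor is $2^{5/2}\Gamma(5/2)\kappa^{1/2}\cdot\tfrac13\sqrt{2\pi\kappa}=2\pi\kappa$, using $\Gamma(5/2)=3\sqrt\pi/4$. Both give the first line of \eqref{eq:rate special}. Squaring \eqref{eq:Bessel asymptotic} then gives the second line, with error $O(\kappa^2N^{-1})$.

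\emph{Logarithmic rates.} We apply Stirling to $\Gamma(N+\gamma+1)$:
\[
\log\Gamma(N+\gamma+1)=N\log N-N+O(\log N).
\]
This gives $\log|\delta\lambda_k|=N\log(\kappa/2)-N\log N+N+O(\log N)=-N\log(2N/(e\kappa))+O(\log N)$ in case (a), where the algebraic factor $N^{2\gamma-4}$ and the constants are absorbed into $O(\log N)$. In case (b) the squared Bessel factor doubles the exponent. Converting to $\log_{10}$ is trivial.

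\emph{Main obstacle.} We must make sure the error terms are uniform: $k$ is fixed, so $\kappa$ is a constant and every $O(\kappa^pN^{-q})$ reduces to $O(N^{-1})$. We also need the $(1+O(\cdot))$ factors from Theorem~\ref{th:main} and Theorem~\ref{th:asymptotic behavior N} to multiply consistently for $N\ge N_0$ with $N\not\equiv k\,(2)$.
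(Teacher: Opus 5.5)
Your proposal is correct and follows the paper's proof exactly. You insert Theorem~\ref{th:asymptotic behavior N} (case (a)) or \eqref{eq:unweighted special} (case (b)) into \eqref{eq:main explicit}, then use \eqref{eq:Bessel asymptotic}, \eqref{eq:constant with explicit} and Stirling's formula; your constant bookkeeping ($4\kappa^3$ in case (a), and $2^{3/2}\Gamma(\nicefrac32)=\sqrt{2\pi}$, $2^{5/2}\Gamma(\nicefrac52)=3\sqrt{2\pi}$ giving $2\pi\kappa$ in case (b)) and your sign argument both check out.
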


\begin{proof}
Insert \eqref{eq:asymptotic inner product} into \eqref{eq:main explicit} for (a), and \eqref{eq:unweighted special} into $\eqref{eq:main explicit}$ together with the fact that $2^{\nicefrac32} \Gamma(\nicefrac{3}{2}) = \sqrt{2\pi}$ and $2^{\nicefrac52} \Gamma(\nicefrac{5}{2}) = 3 \sqrt{2\pi}$ for (b). Then, the second lines of \eqref{eq:rate generic} and \eqref{eq:rate special} follow from \eqref{eq:Bessel asymptotic} and \eqref{eq:constant with explicit}.
Finally, the logarithmic statements follow from Stirling's formula.
\end{proof}

\begin{remark}[Legendre test polynomials]\label{rem:legendre tests}
With $\mathcal S=[P_N\ P_{N-1}]$ and the unweighted inner product, the same proof gives
\[
\delta\lambda_k=\kappa^2\,\frac{\langle P_N,\varphi_k\rangle\,\langle G_N^{(\gamma)},\varphi_k\rangle}{\langle P_N,G_N^{(\gamma)}\rangle}\,\big(1+O(\kappa^2N^{-2})\big),
\]
\[
\langle P_N,\varphi_k\rangle=(-1)^{\lfloor N/2\rfloor}\sqrt{\frac{2\pi}{\kappa}}\,J_{N+1/2}(\kappa),
\qquad
\langle P_N,G_N^{(\gamma)}\rangle=\frac{\sqrt\pi\,\Gamma(N+\gamma)}{\Gamma(\gamma+1)\Gamma(N+\nicefrac12)(2N+1)} .
\]
Its leading term agrees with \eqref{eq:main explicit} to relative order $O(\kappa^2N^{-2})$: the ratio of the two is
\begin{equation*}
    \frac{J_{N+\nicefrac{1}{2}}(\kappa)\,\Gamma(N+\nicefrac32)}{J_{N+\gamma}(\kappa)\,\Gamma(N+\gamma+1)} \biggl(\frac{\kappa}{2}\biggr)^{\gamma-\nicefrac{1}{2}} = 1 + O(\kappa^2 N^{-2}).
\end{equation*}
This agrees with what we expect from the theory, since the $\tau$-spectrum does not depend on the test polynomials, see Remark~\ref{rem:dependence S}.
\end{remark}

\subsubsection{Numerical results}

To validate the previous theoretical results, we implemented and numerically solved the $\tau$-perturbed problem \eqref{eq:second-order tau} using 
Mathematica and the open-source software Dedalus \cite{Burns_2020},
with $(Q_N, Q_{N-1}) = (\gegen{N}{\gamma}, \gegen{N-1}{\gamma})$ for different values of $\gamma$. Since the eigenvalue errors quickly fall below double-precision round-off as $N$ increases, all the results reported in this section were computed with Mathematica. For $N=10$, the Dedalus implementation results agree with the Mathematica ones.
The $\tau$-perturbed problem is solved as the equivalent Petrov--Galerkin problem
\begin{equation*}
    \begin{cases}
        \langle G_m^{(\gamma)}, \mathcal D^2u-\lambda u\rangle_\gamma=0 &\quad  0\leq m\le N-2,\\
        u(\pm1)=0.
    \end{cases}
\end{equation*}
Figure~\ref{fig:low eigenvalues} shows the relative error for $N=64$ and the first four eigenvalues $\lambda_1 = -\nicefrac{\pi^2}{4}$, $\lambda_2 = -\pi^2$, $\lambda_3 = -\nicefrac{9\pi^2}{4}$, and $\lambda_4 = -4\pi^2$.
For all eigenvalues, the plots show two significant downward spikes corresponding to the two predicted special values $\gamma = \nicefrac{1}{2}$ and $\gamma = \nicefrac{3}{2}$, validating Corollary~\ref{cor:rates}.
Additionally, Table~\ref{tab:errors} reports the relative errors for the first eigenvalue for six representative values of $\gamma$ and for $N=10$ and $N=24$. At $N=10$, the special values $\gamma\in\{\nicefrac{1}{2}, \nicefrac{3}{2}\}$ yield relative errors of orders $10^{-15}$ and $10^{-18}$, compared with $10^{-10}$ and $10^{-11}$ for the other values of $\gamma$. The difference becomes more pronounced at $N=24$, where the errors decrease from $10^{-29}$ and $10^{-31}$ for the non-special values of $\gamma$ to $10^{-53}$ and $10^{-56}$ for $\gamma = \nicefrac{1}{2}$ and $\gamma = \nicefrac{3}{2}$, respectively. This behavior is consistent with the theoretical prediction of Corollary~\ref{cor:rates} that the values $\gamma\in\{\nicefrac{1}{2}, \nicefrac{3}{2}\}$ asymptotically double the number of correct decimal digits.

\begin{figure}[htb]
    \centering
    \includegraphics[width=0.6\textwidth]{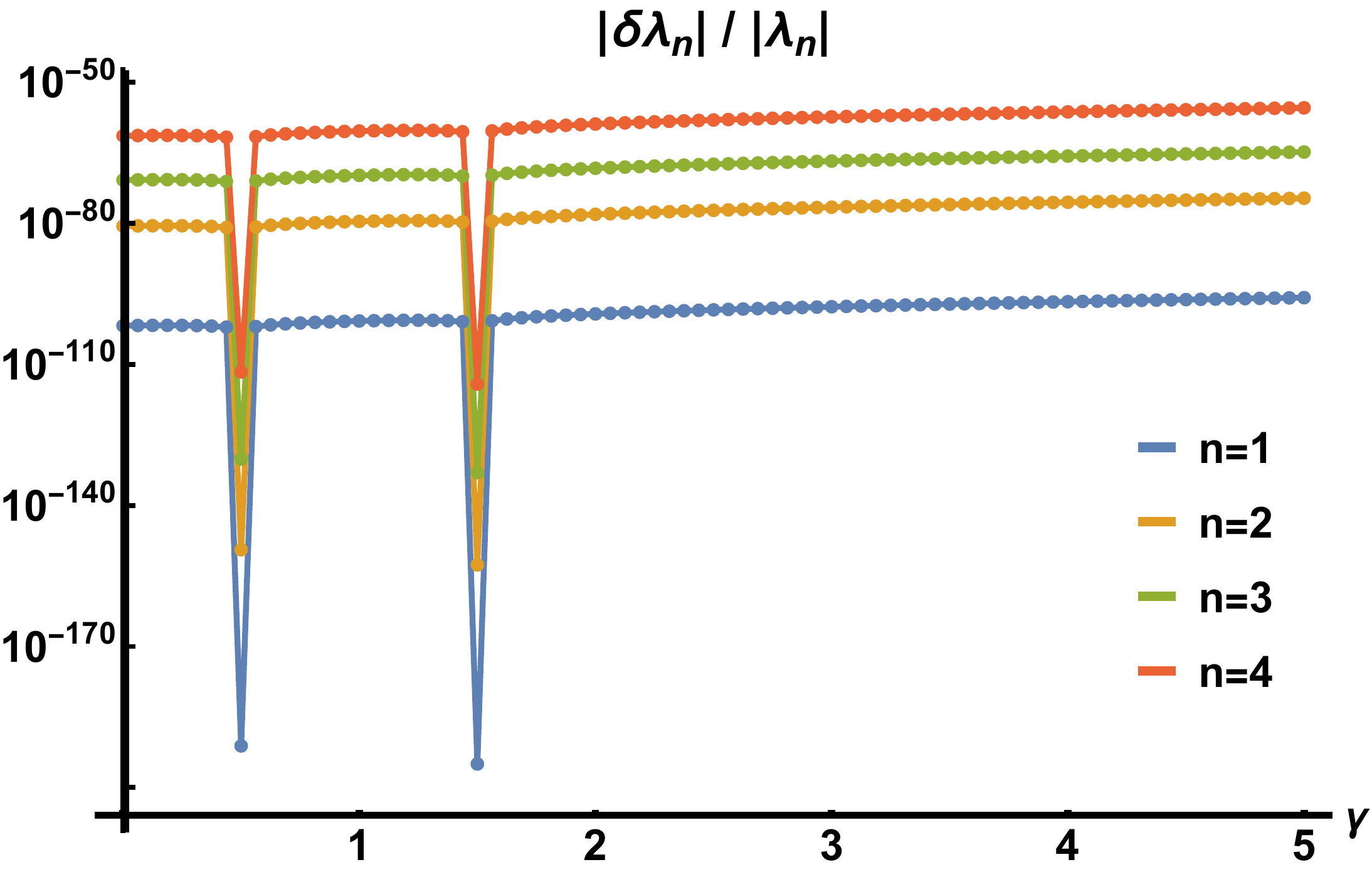}
    \caption{Relative error of the first four eigenvalues versus the Gegenbauer parameter, $0 \le \gamma \le 5$ with spacing $1/16$ and 1000-digit numerical precision. The exact eigenvalues are $\lambda_{n} = -(\nicefrac{n \pi}{2})^{2}$. The two spikes correspond to $\gamma = \nicefrac12, \nicefrac32$, Legendre $P_{N}(x)$ and a multiple of $\mathcal D P_{N+1}(x)$ respectively. The finite problem uses up to degree-$N=64$ modes. The $\tau$-corrections use $\gegen{64}{\gamma}$ and $\gegen{63}{\gamma}$ for each $\gamma$.}
    \label{fig:low eigenvalues}
\end{figure}

\begin{table}[htb]
    \centering
    \begin{tabular}{|c||cccccc|}
        \hline
        $N$ & 10 & 10 & 10 & 10 & 10 & 10 \\
        \hline
        $\gamma$ & $0$ & $\nicefrac{1}{2}$ & $1$ & $\nicefrac{3}{2}$ & $2$ & $\nicefrac{5}{2}$ \\
        \hline
        $\lvert\delta\lambda_{1}\rvert/\lvert\lambda_{1}\rvert$
        & $4.51\times10^{-11}$
        & $1.65\times10^{-15}$
        & $5.49\times10^{-11}$
        & $8.52\times10^{-18}$
        & $2.77\times10^{-10}$
        & $8.61\times10^{-10}$ \\
        \hline
        \hline
        $N$ & 24 & 24 & 24 & 24 & 24 & 24 \\
        \hline
        $\gamma$ & $0$ & $\nicefrac{1}{2}$ & $1$ & $\nicefrac{3}{2}$ & $2$ & $\nicefrac{5}{2}$ \\
        \hline
        $\lvert\delta\lambda_{1}\rvert/\lvert\lambda_{1}\rvert$
        & $2.73\times10^{-31}$
        & $7.07\times10^{-53}$
        & $9.60\times10^{-31}$
        & $7.00\times10^{-56}$
        & $1.15\times10^{-29}$
        & $5.24\times10^{-29}$ \\
        \hline
    \end{tabular}
    \caption{Relative error of the first eigenvalue for $N=10,\, 24$ and six representative values of $\gamma$.}
    \label{tab:errors}
\end{table}

\subsection{The largest eigenvalues}\label{sec:largest}

In this subsection, we briefly describe the $\tau$-eigenvalues of largest modulus in the parity class of $G_N^{(\gamma)}$ (for the other class, replace $N$ by $N-1$).
For this parity block, set $z=1/\lambda$, $\lambda\neq 0$. Then we consider the polynomial
\begin{equation*}
    p_{N}(z) \coloneqq \sum_{m=0}^{\lfloor N/2\rfloor}z^{m} \mathcal D^{2m} G_{N}^{(\gamma)}(1) \, = \, \sum_{m=0}^{\lfloor N/2\rfloor} (4z)^{m} 
    \frac{\Gamma(\gamma+2m+1)}{\Gamma(\gamma+1)}\, \frac{\Gamma(2\gamma + 2m + N)}{(N-2m)! \Gamma(2\gamma+4m+1)},
\end{equation*}
see \eqref{eq:mth evaluation at 1} for the evaluation of the derivative at $1$. $p_N(z)$ is the characteristic polynomial, and it can be derived by using $u=\tau_1\mathcal G_\lambda^N \gegen{N}{\gamma}$ by~\eqref{eq:inverse G} together with the boundary condition $u(1)=0$. Therefore, $\lambda$ is a $\tau$-eigenvalue in this parity class if and only if $p_N(\nicefrac{1}{\lambda}) = 0$.
In particular, the largest $\tau$-eigenvalue $\lambda$ corresponds to the root with smallest modulus of $p_{N}(z) = 0$.
This polynomial formulation is already in Charalambides--Waleffe~\cite[Section~3]{Charalambides_2008-2}. In that work, the authors showed using this polynomial that if $-\nicefrac{1}{2} <\gamma \leq \nicefrac{5}{2}$, then the $\tau$-eigenvalues of \eqref{eq:second-order tau} are real, negative and distinct, and this result is sharp, since if $\gamma>\nicefrac{5}{2}$ the $\tau$ spectrum contains pairs of conjugate complex eigenvalues. 

The polynomial $p_{N}(z)/G_{N}^{(\gamma)}(1) = \sum_{m} c_{m} z^{m} $ is of hypergeometric type with the ratio of coefficients,
\begin{align*}
    \frac{c_{m+1}}{c_{m}} \, &= \,  \frac{(2 m-N) (2 m-N+1) (2 \gamma +2 m+N) (2 \gamma +2
   m+N+1)}{(2 \gamma +4 m+1) (2 \gamma +4 m+3)}  \\
   & \, \sim \, \frac{(N+\gamma)^{4}}{(2 \gamma +4 m+1) (2 \gamma +4 m+3)} 
   \qquad \text{as} \ N\, \to \, \infty.
\end{align*}
Then it holds that
\begin{equation*}
    \lambda \sim -\frac{(N+\gamma)^{4}}{4\,\zeta_{\gamma}^{2}}  \qquad \text{as} \qquad N\,\to \, \infty,
\end{equation*}
where $\zeta_{\gamma}$ is the zero with smallest modulus of the Mittag--Leffler function \cite[Section~10.46]{DLMF},
\begin{equation*}
    \lim_{N \to \infty} \frac{p_{N}(-4\zeta^2/N^{4})}{\gegen{N}{\gamma}(1)}\frac{1}{\Gamma(\gamma+\nicefrac{1}{2})} =  W_{\gamma}(\zeta) \coloneqq \, E_{2,\gamma + 1/2}(-\zeta^{2}) \, = \,
    \sum_{m \ge 0} \frac{(-\zeta^{2})^{m}}{\Gamma(2m + \gamma + \nicefrac{1}{2})}.
\end{equation*}
Special cases include
\begin{equation*}
    W_{1/2}(\zeta) \, = \, \cos(\zeta), \qquad
   W_{3/2}(\zeta) \, = \, \frac{\sin(\zeta)}{\zeta}, \qquad
    W_{5/2}(\zeta) \, = \, \frac{2\sin^{2}(\zeta/2)}{\zeta^{2}},
\end{equation*}
so $\zeta_\gamma=\nicefrac\pi2,\pi,2\pi$, the last being a double root.
Using the following integral representation \cite{Haubold_2011} for $\gamma > \nicefrac{1}{2}$,
\begin{equation*}
    W_{\gamma}(\zeta) = \frac{1}{\Gamma(\gamma-\nicefrac{1}{2})} \int_0^1 (1-s)^{\gamma-\nicefrac{3}{2}} \cos(\zeta s)\, ds
\end{equation*}
and integrating by parts twice, for $\gamma > \nicefrac{5}{2}$ we get
\begin{equation*}
    W_{\gamma}(\zeta) = \frac{(\gamma-\nicefrac{3}{2})(\gamma-\nicefrac{5}{2})}{\Gamma(\gamma-\nicefrac{1}{2}) \zeta^2} \int_0^1 (1-s)^{\gamma-\nicefrac{7}{2}} (1-\cos(\zeta s))\, ds,
\end{equation*}
which is strictly positive for all $\zeta \in \mathbb R\setminus\{0\}$. Consequently, for $N$ sufficiently large, the largest $\tau$-eigenvalue is complex. As $\gamma\to \nicefrac{5}{2}^-$, the two smallest zeros of $W_{\gamma}$ merge at $2\pi$. This phenomenon can be observed in Figure~\ref{fig:largest} which shows $\lvert{\rm Re}(\lambda)\rvert$ and $\lvert{\rm Im}(\lambda)\rvert$ of the two largest eigenvalues in one parity block versus the parameter $0\leq \gamma\leq 5$ for the $\tau$-perturbed problem \eqref{eq:second-order tau} with $N=64$. The absolute real part of the largest eigenvalue decreases with $\gamma$. Additionally, both eigenvalues are real for $0\leq \gamma\leq \nicefrac{5}{2}$ and complex for $\nicefrac{5}{2} < \gamma \leq 5$, consistent with the result of \cite[Section 3.3]{Charalambides_2008-2}.

\begin{figure}[htb]
    \centering
    \includegraphics[width=0.6\textwidth]{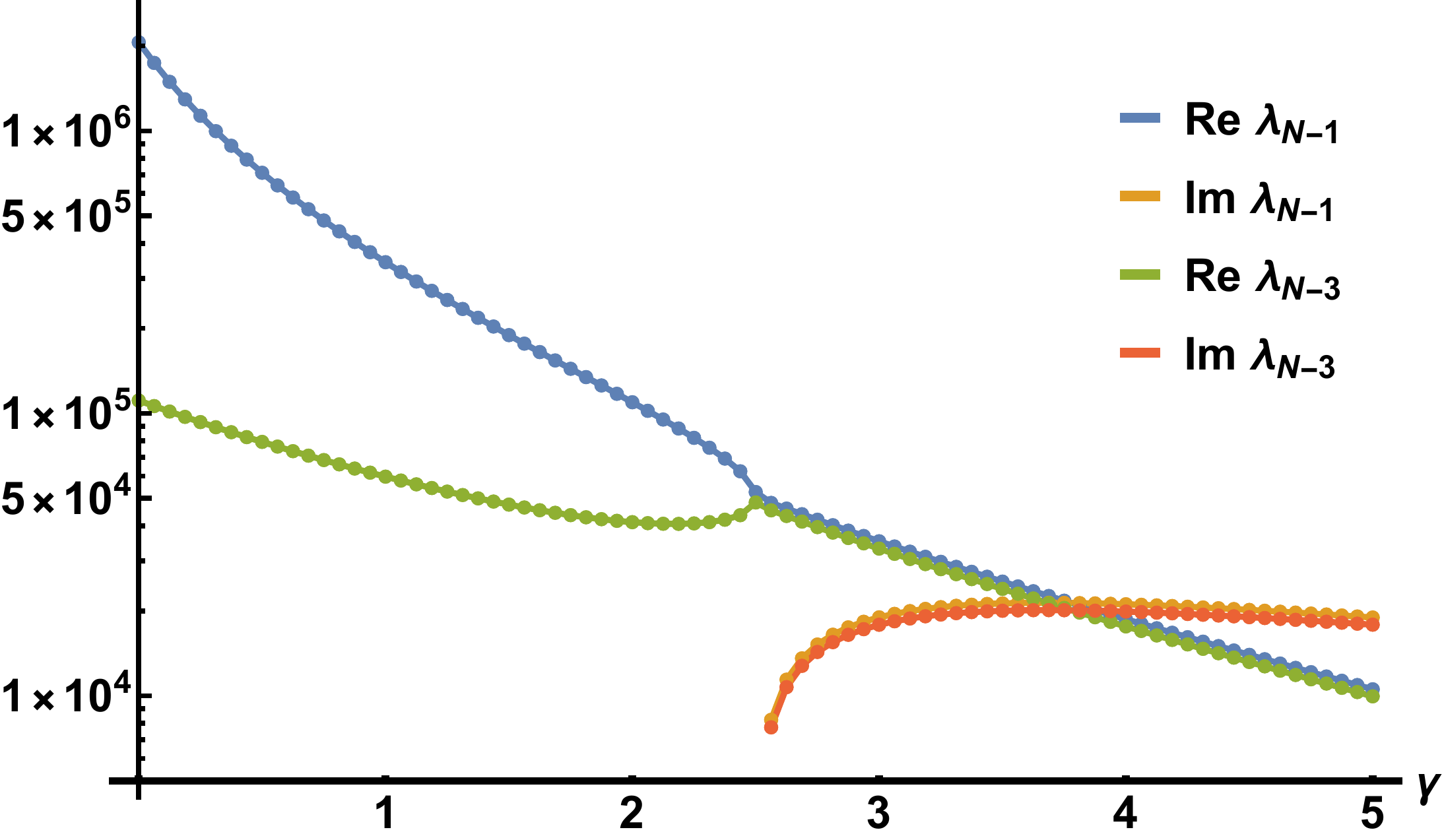}
    \caption{The absolute magnitudes near the spectrum cutoff. For $N=64$, the $\tau$-approximated spectrum contains $N+1$ polynomial modes, two $\tau$ parameters, and four formally $\infty$-eigenvalues due to the rank-deficient mass matrix. Hence, there are $N-1$ finite eigenvalues. We plot the largest one (index $N-1$) and the next one of the same parity (index $N-3$). Non-real eigenvalues appear only for $\gamma > 5/2$. For $N-1$, the ratio $\lambda{(\gamma=0)} / \lambda{(\gamma=5/2)} \approx 39.1$. As $N\to \infty$, the ratio approaches $\approx 47.8$.}
    \label{fig:largest}
\end{figure}

\section{Conclusions}

Given the eigenvalue problem for a constant-coefficient linear differential operator $\mathcal L$ of order $K$ on an interval together with appropriate boundary conditions, we have characterized its $\tau$-approximated spectrum with a spectral determinant property $D_N(\lambda) = 0$ (Theorem~\ref{th:spectral determinat}). This property can be analyzed with two approaches: either rewriting it using an orthonormal eigenfunction basis of $L^2(a,b)$ (Corollary~\ref{cor:spectral determinant}), or by factorizing it using a finite geometric sum (Theorem~\ref{th:factorization}). The factorization separates the exact eigenvalues, which are the zeros of the exact boundary determinant $E$ and hence generically poles of $D_N$, from the $\tau$-eigenvalues $\lambda\neq\mathcal L_0$, which are exactly the zeros of the finite tau determinant $F_N$ (Corollary~\ref{cor:characterization eigenvalues}), and from the zeros of the auxiliary moment determinant $A_N$, which depend on the choice of test polynomials.
We have applied these approaches to a second-order Sturm-Liouville problem with Gegenbauer $\tau$-polynomials, and obtained an explicit leading-order formula for the eigenvalue error (Theorem~\ref{th:main}). It shows that the choices $\gamma=\nicefrac{1}{2}$ and $\gamma=\nicefrac{3}{2}$ asymptotically double the number of correct digits (Corollary~\ref{cor:rates}). The reason is that only for these values do the unweighted Gegenbauer coefficients of the eigenfunctions decay faster than any power of $N$ (Remark~\ref{rem:why}). Additionally, the largest eigenvalue grows like $N^4$ with $\gamma$-dependent constant. It stays real for $\gamma\leq\nicefrac{5}{2}$ and becomes complex for larger $\gamma$ (Section~\ref{sec:largest}). Since this eigenvalue limits the stable time step of explicit schemes for time-dependent problems, $\gamma=\nicefrac{3}{2}$ is preferable: its constant is four times smaller than that of $\gamma=\nicefrac{1}{2}$.
Overall, this work provides an asymptotic description of both ends of the $\tau$-spectrum, each obtained from one of the two characterizations of Section~\ref{sec:spectral determinant}: the eigenfunction expansion of the spectral determinant gives the error of the low, resolved eigenvalues, while the finite tau determinant $F_N$ gives the growth and the real or complex nature of the largest eigenvalues. This combination can serve as a starting point for a similar analysis of more complex problems for which the optimal choice of the $\tau$-polynomials is not clear.
Natural extensions include variable-coefficient operators, other boundary conditions, and higher-order problems.

\appendix

\section{Properties of Gegenbauer polynomials}\label{app:properties Gegenbauer}

Given $\gamma > -1/2$, the {\it Gegenbauer polynomials} (or {\it ultraspherical polynomials}) $\{C_{N}^{(\gamma)}\}_{N\in\mathbb N_0}$ are the family of orthogonal polynomials on the interval $[-1,1]$ with respect to the weight $w_{\gamma}(x) = (1-x^2)^{\gamma-1/2}$.
Following \cite{Charalambides_2008}, we define the Gegenbauer polynomial family $\{G_{N}^{(\gamma)}\}_{N\in\mathbb N_0}$ with the following nonstandard normalization to avoid the singularity at $\gamma = 0$:
\begin{equation*}
    \begin{cases}
        \gegen{0}{\gamma}(x) = 1 \\
        \gegen{N}{\gamma}(x) = \displaystyle\frac{C_N^{(\gamma)}(x)}{2\gamma} & \qquad\text{if}\ N\geq 1.
    \end{cases}
\end{equation*}
Up to a normalization, they correspond to Jacobi polynomials $\{P_{N}^{(\alpha, \beta)}\}_{N\in\mathbb N_0}$ with $\alpha = \beta = \gamma-1/2$, and they include Legendre polynomials ($\gamma = 1/2$) and Chebyshev polynomials ($\gamma = 0$).

We summarize here some of the properties of Gegenbauer polynomials. We refer to~\cite{Charalambides_2008} and the references therein for their proofs.
\begin{itemize}
    \item {\bf Parity:} For all $N\in \mathbb N_0$ and $\gamma > -\nicefrac{1}{2}$,
    \begin{equation}\label{eq:parity}
        \gegen{N}{\gamma}(x) = (-1)^N \gegen{N}{\gamma}(-x)
        \qquad\forall x\in\mathbb R
    \end{equation}

    \item {\bf Derivative recurrence formulas:} For all $\gamma > -\nicefrac{1}{2}$,
    \begin{equation}\label{eq:derivative recurrence formula-bis}
        \begin{cases}
            \mathcal D \gegen{1}{\gamma}(x) = \gegen{0}{\gamma} = 1 & \\
            \mathcal D \gegen{N}{\gamma}(x) = 2(\gamma+1) \gegen{N-1}{\gamma+1} &\qquad N\geq 2
        \end{cases}
    \end{equation}
    Additionally,
    \begin{equation}\label{eq:derivative recurrence formula}
        \begin{cases}
            \gegen{0}{\gamma} (x) = \mathcal D \gegen{1}{\gamma} (x) & \smallskip\\
            \gegen{N}{\gamma} (x) = \displaystyle\frac{\mathcal D[\gegen{N+1}{\gamma} (x) - \gegen{N-1}{\gamma} (x)]}{2(N+\gamma)}  &\qquad N\geq 1
        \end{cases}
    \end{equation}

    \item {\bf $m$-th derivative formula:} For all $\gamma > -\nicefrac{1}{2}$,
    \begin{subequations}\label{eq:mth derivative formula}
        \begin{empheq}[left=\empheqlbrace]{align}
            &\mathcal D^m \gegen{N}{\gamma} (x) = 2^m \displaystyle\frac{\Gamma(\gamma+m+1)}{\Gamma(\gamma+1)} \gegen{N-m}{\gamma+m}(x)  &  m\leq N-1 \label{eq:mth derivative formula 1} \medskip\\
            &\mathcal D^{N} \gegen{N}{\gamma} (x) = 2^{N-1} \displaystyle\frac{\Gamma(\gamma+N)}{\Gamma(\gamma+1)} & \label{eq:mth derivative formula 2}
        \end{empheq}
    \end{subequations}

    \item {\bf Evaluation at $x=1$:} For all $N\geq 1$ and $\gamma > - \nicefrac{1}{2}$,
    \begin{equation}\label{eq:evaluation at 1}
        \gegen{N}{\gamma}(1) = \frac{\Gamma(2\gamma+N)}{N! \Gamma(2\gamma+1)}
    \end{equation}

    \item {\bf Evaluation at $x=1$ of the $m$-th derivative:}
    \begin{equation}\label{eq:mth evaluation at 1}
        \mathcal D^{m} \gegen{N}{\gamma}(1) = 2^{m} \frac{\Gamma(\gamma+m+1)}{\Gamma(\gamma+1)}\, \frac{\Gamma(2\gamma + m + N)}{(N-m)! \Gamma(2\gamma+2m+1)}
        \qquad m \leq N
    \end{equation}
    Note that, for $m\leq N-1$, this follows directly from the combination of \eqref{eq:mth derivative formula 1} and \eqref{eq:evaluation at 1}, while for $m = N$, this formula reduces to the right-hand side of \eqref{eq:mth derivative formula 2}.

\end{itemize}

\section{Proof of Theorem~\ref{th:main}}\label{app:proof}

\paragraph{Notation.}
For $\lambda\in\mathbb C$, write $\lambda=-\mu^2$ with $\mu\in\mathbb C$, let $\varphi_\mu(x)=\cos(\mu x)$ if $N$ is even, $\varphi_\mu(x)=\sin(\mu x)$ if $N$ is odd, so that $\varphi_\kappa=\varphi_k$, and let
\begin{equation*}
    c_N(\mu)\coloneqq-\frac{\mathcal D\varphi_\mu(1)}{2\varphi_\mu(1)}
    \qquad\Rightarrow\qquad
    c_N(\mu) = \frac{\mu}{2} \tan \mu \text{ if $N$ is even,}
    \qquad
    c_N(\mu) = -\frac{\mu}{2} \cot \mu \text{ if $N$ is odd.}
\end{equation*}
Throughout this proof, we simplify the notation as follows: $G=G_N^{(\gamma)}$, $w=w_\gamma$.

\paragraph{Step 1: The first factor in \eqref{eq:determinant} in closed form.}
For $\lambda\notin\sigma(\mathcal D^2_{\rm Dir})$, we want to show that
\begin{equation}\label{eq:secular}
    \Sigma_N^{(\gamma)}(\lambda) \coloneqq \sum_{j\not\equiv N\,(2)}\frac{\lambda_j\langle G,\varphi_j\rangle_\gamma\langle G,\varphi_j\rangle}{\lambda_j-\lambda}
    =\|G\|_\gamma^2-\frac{\mu}{2}\,I(\mu) + c_N(\mu)\,\langle G,\varphi_\mu\rangle_\gamma\langle G,\varphi_\mu\rangle,
\end{equation}
where
\begin{equation*}
    I(\mu)\coloneqq\iint_{[-1,1]^2} w(x)G(x)\,\sin(\mu|x-y|)\,G(y)\,dx\,dy.
\end{equation*}
With this goal, let $u=(\mathcal D^2-\lambda)^{-1}G$, i.e., $\mathcal D^2u+\mu^2u=G$, $u(\pm1)=0$.
Using $\lambda_j (\lambda_j-\lambda)^{-1}=1+\lambda(\lambda_j-\lambda)^{-1}$, we split the series $\Sigma_N^{(\gamma)}(\lambda)$ into two parts. Remarking that $\innLtwo{G}{\varphi_j} = 0$ when $j\equiv N\,(2)$ by parity,
the eigenfunction expansion $u=\sum_{j\in\mathbb N}\langle G,\varphi_j\rangle\varphi_j/(\lambda_j-\lambda)$ has coefficients of order $j^{-3}$, hence converges uniformly on $[-1,1]$; since $wG\in L^1(-1,1)$, the second part equals $\lambda\langle G,u\rangle_\gamma$.
The partial sums of $\sum_{j\in\mathbb N}\langle G,\varphi_j\rangle\varphi_j$ are Fourier partial sums of a piecewise smooth periodic extension of $G$, hence uniformly bounded and convergent to $G$ on $(-1,1)$~\cite{BOOK_Zygmund_2003}; by dominated convergence, the first part equals $\|G\|_\gamma^2$.
Therefore,  $\Sigma_N^{(\gamma)}(\lambda) =\|G\|_\gamma^2-\mu^2\langle G,u\rangle_\gamma$.
The Dirichlet Green's function of $\mathcal D^2+\mu^2$ on $[-1,1]$ is
\[
\mathcal K_\mu(x,y)=\frac{\sin(\mu|x-y|)}{2\mu}+\frac{\cos(2\mu)\cos(\mu(x-y))-\cos(\mu(x+y))}{2\mu\sin(2\mu)} ,
\]
and inserting $u=\int_{-1}^1\mathcal K_\mu(\cdot,y)G(y)dy$, the first term gives $-\frac{\mu}{2} I(\mu)$, and the second term, by parity, $c_N(\mu)\,\langle G,\varphi_\mu\rangle_\gamma\langle G,\varphi_\mu\rangle$, which proves \eqref{eq:secular}.

\paragraph{Step 2: Expansion of $I(\mu)$.}
Iterating the identity
\begin{equation}\label{eq:gegenbauer identity}
    \mathcal D\bigl[(1-x^2)^{\nu+1/2}G_{n-1}^{(\nu+1)}\bigr]
    =-\frac{n(n+2\nu)}{2(\nu+1)}(1-x^2)^{\nu-1/2}G_n^{(\nu)},
\end{equation}
see~\cite[\S18.9]{DLMF}, we get, for $0\leq l<N$,
\begin{equation*}
    wG=(-1)^lc_l\mathcal D^l
    \Bigl[(1-x^2)^{\gamma+l-1/2}G_{N-l}^{(\gamma+l)}\Bigr],
    \qquad
    c_l=\prod_{i=0}^{l-1}
    \frac{2(\gamma+i+1)}{(N-i)(N+2\gamma+i)}.
\end{equation*}
The bracket and its first $l-1$ derivatives vanish at $\pm1$.
Thus, using
$\mathcal D_x^{2m+2}|x-y|^{2m+1}
=2(2m+1)!\delta(x-y)$,
integration by parts gives, for $l=2m+2<N$,
\begin{equation}\label{eq:double integral}
    \frac{1}{(2m+1)!}
    \iint_{[-1,1]^2}
    w(x)G(x)|x-y|^{2m+1}G(y)\,dx\,dy
    =2c_l e_l\|G\|_\gamma^2,
\end{equation}
where
\begin{equation*}
    e_l=
    \frac{\langle(1-x^2)^lG_{N-l}^{(\gamma+l)},G\rangle_\gamma}
    {\|G\|_\gamma^2}.
\end{equation*}
For $l=2$, applying the Gegenbauer identity \eqref{eq:gegenbauer identity} twice and orthogonality,
\begin{equation*}
    e_2=
    -\frac{N(N-1)(N+2\gamma)(N+2\gamma+1)}
    {8(\gamma+1)(\gamma+2)((N+\gamma)^2-1)},
    \qquad\text{which implies}\qquad
    2c_2 e_2 = -\frac{1}{(N+\gamma)^2-1}.
\end{equation*}
Consequently,
\begin{equation}\label{eq:double integral paired}
    \iint_{[-1,1]^2}
    w(x)G(x)|x-y|G(y)\,dx\,dy
    =-\frac{\|G\|_\gamma^2}{(N+\gamma)^2-1}.
\end{equation}
For $l\le N/2$, Cauchy--Schwarz and $(1-x^2)^{2l}\le(1-x^2)^l$ imply
\begin{equation*}
    |e_l| \le\frac{\|G_{N-l}^{(\gamma+l)}\|_{\gamma+l}}{\|G\|_\gamma}.
\end{equation*}
Using the Gegenbauer norm formula \eqref{eq:norm}, we obtain
\begin{equation*}
    |c_l e_l|
    \le
    \left[(N+2\gamma)_l\frac{N!}{(N-l)!}\right]^{-1/2}
    \le\left(\frac2N\right)^l,
\end{equation*}
where $(a)_l=a(a+1)\cdots(a+l-1)$ is the Pochhammer symbol.
Expanding the sine kernel in its Taylor series, the term of order $\mu^{2m+2}$ in $\frac{\mu}{2}I(\mu)$ is $(-1)^m\mu^{2m+2}c_{2m+2}e_{2m+2}\|G\|_\gamma^2$ whenever $2m+2\le N/2$. By~\eqref{eq:double integral paired}, the leading term is
$-\mu^2\|G\|_\gamma^2/[2((N+\gamma)^2-1)]$. For $4|\mu|\le N$, the remaining terms with $2m+2\le N/2$ are bounded by
\begin{equation*}
    \|G\|_\gamma^2
    \sum_{\substack{l\ge4\\l\text{ even}}}
    \left(\frac{2|\mu|}{N}\right)^l
    \le
    2\left(\frac{2|\mu|}{N}\right)^4
    \|G\|_\gamma^2.
\end{equation*}
The higher-order terms with $2m+2>\nicefrac{N}{2}$ are bounded by
\begin{equation*}
    \|wG\|_{L^1}\|G\|_{L^1}
    \sum_{2m+2>N/2}
    \frac{(2|\mu|)^{2m+2}}{(2m+1)!}.
\end{equation*}
For bounded $|\mu|$, this tail is super-algebraically small relative to $\|G\|_\gamma^2$, since $\|wG\|_{L^1}\|G\|_{L^1}/\|G\|_\gamma^2$ grows at most polynomially in $N$ \cite{BOOK_Szego_1975}. Therefore, for any fixed $\mu_0>0$, uniformly for $|\mu|\le\mu_0$, and $N$ sufficiently large, we have
\begin{equation}\label{eq:I expansion}
    \frac{\mu}{2}I(\mu)
    =-\frac{\mu^2\|G\|_\gamma^2}
    {2((N+\gamma)^2-1)}
    +O(|\mu|^4N^{-4})\|G\|_\gamma^2.
\end{equation}

\paragraph{Step 3: The reduced sum on $D$.}
From~\eqref{eq:secular}, we get
\begin{equation*}
    \Sigma_{N,k}^{(\gamma)}(\lambda)
    =\|G\|_\gamma^2-\frac{\mu}{2}I(\mu) + R(\lambda),
    \qquad
    R(\lambda)\coloneqq c_N(\mu)\,\langle G,\varphi_\mu\rangle_\gamma\langle G,\varphi_\mu\rangle - \frac{\lambda_k\langle G,\varphi_k\rangle_\gamma\langle G,\varphi_k\rangle}{\lambda_k-\lambda}.
\end{equation*}
Since $D$ contains no $\lambda_j$ with $j\neq k$ and $j\equiv k\, (2)$, the reduced sum $\Sigma_{N,k}^{(\gamma)}(\lambda)$ is analytic on $D$, and so are $\frac\mu2I(\mu)$ and hence $R(\lambda)$.
Since $r_k\leq\nicefrac12\lvert\lambda_{k+2}-\lambda_k\rvert=(k+1)\pi^2/2$, we have $\lvert\mu\rvert\leq\kappa+\pi$ on $D$.
By~\eqref{eq:weighted coefficient} with $\kappa$ replaced by $\mu$, the bound $\lvert J_\nu(z)\rvert\leq\lvert z/2\rvert^\nu e^{\lvert{\rm Im}\,z\rvert}/\Gamma(\nu+1)$~\cite[Eq.~10.14.4]{DLMF}, and $\lvert\langle G,\varphi_\mu\rangle\rvert\leq e^{\lvert{\rm Im}\,\mu\rvert}\|G\|_{L^1}$, we obtain
\begin{equation*}
    \lvert c_N(\mu)\,\langle G,\varphi_\mu\rangle_\gamma\langle G,\varphi_\mu\rangle\rvert + \lvert\lambda_k\langle G,\varphi_k\rangle_\gamma\langle G,\varphi_k\rangle\rvert \leq
    C\,N^{q}\left(\frac{\kappa+\pi}{2}\right)^N \frac{1}{\Gamma(N+\gamma+1)},
    \qquad\lambda \in \partial D
\end{equation*}
with $C$ and $q$ depending only on $k$ and $\gamma$.
Moreover, on $\partial D$, $\lvert\lambda-\lambda_k\rvert=r_k$ and $c_N(\mu)$ is uniformly bounded, since the poles of $c_N$ are the $\lambda_j$ with $j\equiv k\, (2)$, all at distance at least $r_k$ from $\partial D$.
Hence, $R(\lambda)$ is super-exponentially small on $\partial D$ and, by the maximum modulus principle, on $D$. In particular, $R(\lambda) =O (N^{-4})\|G\|_\gamma^2$ on $D$.
Combining this with~\eqref{eq:I expansion} for $\mu^2=-\lambda$, we have, uniformly for $\lambda\in D$,
\begin{equation}\label{eq:reduced sum on D}
    \Sigma_{N,k}^{(\gamma)}(\lambda) = \|G\|_\gamma^2\Bigl(1-\frac{\lambda}{2(N+\gamma)^2-2}+O\bigl(\lvert\lambda\rvert^2 N^{-4}\bigr)\Bigr).
\end{equation}

\paragraph{Step 4: Existence and uniqueness.}
Restricted to the parity class of $\varphi_k$, \eqref{eq:second-order tau} has the single $\tau$-polynomial $G$ and the single boundary condition $u(1)=0$.
Note that $\lambda=0$ is not a $\tau$-eigenvalue, since $\mathcal D^2$ maps $\mathbb C_N[x]$ into $\mathbb C_{N-2}[x]$, making $\tau_1 =0$ and consequently $u=0$.
For $\lambda\neq 0$, since $(\mathcal D^2_{{\rm Dir}}-\lambda)^{-1}$ and $\mathcal G_\lambda^N$ \eqref{eq:inverse G} preserve parity, we may take the fundamental system $\Phi_\lambda=[\varphi_\mu,\psi_\mu]$, with $\psi_\mu$ of the parity opposite to $\varphi_\mu$, and the equivalent boundary conditions $\nicefrac12(u(1)\pm (-1)^N u(-1))=0$. Then, all matrices in the identity
\begin{equation*}
    \langle\mathcal S^T,\mathcal D^2(\mathcal D^2_{{\rm Dir}}-\lambda)^{-1}\mathcal Q\rangle_\gamma=-\langle\mathcal S^T,\mathcal D^2\Phi_\lambda\rangle_\gamma\,\mathcal B(\Phi_\lambda)^{-1}\mathcal B(\mathcal G_\lambda^N\mathcal Q)
\end{equation*}
of Theorem~\ref{th:factorization} are diagonal.
Their first diagonal entries give
\begin{equation*}
    \Sigma_N^{(\gamma)}(\lambda)=\mu^2\langle G,\varphi_\mu\rangle_\gamma\,\frac{(\mathcal G_\lambda^NG)(1)}{\varphi_\mu(1)}
\end{equation*}
for $\lambda\notin\sigma(\mathcal D^2_{\rm Dir})\cup\{0\}$, and, since both sides are analytic near the $\lambda_j$ with $j\equiv N\, (2)$, for all $\lambda\in D\setminus\{0,\lambda_k\}$.
By Corollary~\ref{cor:characterization eigenvalues}, $\lambda$ is a $\tau$-eigenvalue in this parity class if and only if $(\mathcal G_\lambda^NG)(1)=0$.
Moreover, since $\lambda =0$ is not a $\tau$-eigenvalue and $\innprodgegen{G}{\varphi_{\mu}}\neq 0$ by \eqref{eq:weighted coefficient} with $\mu$ instead of $\kappa$, $\lambda\in D\setminus\{0, \lambda_k\}$ is a $\tau$-eigenvalue in this parity class if and only if $\Sigma_N^{(\gamma)}(\lambda)=0$.
Let
\begin{equation*}
    f(\lambda)\coloneqq(\lambda_k-\lambda) \Sigma_N^{(\gamma)}(\lambda) = \lambda_k \langle G,\varphi_k\rangle_\gamma\langle G,\varphi_k\rangle + g(\lambda)
    \qquad\text{where}\qquad
    g(\lambda)\coloneqq(\lambda_k-\lambda)\Sigma_{N,k}^{(\gamma)}(\lambda).
\end{equation*}
Since $\varphi_{\mu}(1)$, as a function of $\lambda$, has a simple zero at $\lambda_k$, both $f(\lambda)$ and $g(\lambda)$ are analytic in $D$.
Moreover, $\lambda_k$ is not a $\tau$-eigenvalue and, by~\eqref{eq:reduced sum on D}, $g(\lambda)$ has exactly one zero in $D$ at $\lambda_k$. This implies that $\lambda\neq 0$ is a $\tau$-eigenvalue in $D$ if and only if $f(\lambda) = 0$.
Additionally, there exists $N_0$ such that, for $N\geq N_0$,
\begin{equation*}
    \lvert f(\lambda)-g(\lambda)\rvert
    = \lvert\lambda_k \langle G,\varphi_k\rangle_\gamma\langle G,\varphi_k\rangle \rvert
    < \frac12r_k\|G\|_\gamma^2\leq\lvert g(\lambda)\rvert,
    \qquad
    \lambda\in\partial D.
\end{equation*}
By Rouch\'e's theorem, $f(\lambda)$ has exactly one zero $\lambda\in D$, counted with multiplicity.
Since $\Sigma_N^{(\gamma)}(\lambda)$ has real coefficients and $\overline{\lambda}\in D$, $f(\overline\lambda)=\overline{f(\lambda)}$, and $\overline\lambda$ is also a zero of $f$ in $D$. By uniqueness, this implies that $\lambda$ is real. This proves (a).

\paragraph{Step 5: Asymptotics.}
Let now $\lambda$ be the unique $\tau$-eigenvalue in $D$ from Step 4. Since $\Sigma_N^{(\gamma)}(\lambda) = 0$, we have $\lambda_k \langle G,\varphi_k\rangle_\gamma\langle G,\varphi_k\rangle = -(\lambda_k-\lambda)\Sigma_{N,k}^{(\gamma)}(\lambda)$, and therefore
\begin{equation*}
    \lvert\delta\lambda_k\rvert=\frac{\lvert\lambda_k \langle G,\varphi_k\rangle_\gamma\langle G,\varphi_k\rangle  \rvert}{\lvert\Sigma_{N,k}^{(\gamma)}(\lambda)\rvert}\leq\frac{2\lvert\lambda_k \langle G,\varphi_k\rangle_\gamma\langle G,\varphi_k\rangle  \rvert}{\|G\|_\gamma^2},
\end{equation*}
so $\delta\lambda_k = \lambda_k-\lambda$ is super-exponentially small. Combining this with \eqref{eq:reduced sum on D}, we get \eqref{eq:reduce sum asymptotic}. Then, \eqref{eq:error identity} implies \eqref{eq:main}. Finally, \eqref{eq:main explicit} follows by inserting \eqref{eq:ratio} into \eqref{eq:main}.

\bibliographystyle{siam}
\bibliography{analysis_tau_refs}

\end{document}